\numberwithin{equation}{section}
\newtheorem{thm}[subsection]{Theorem}
\newtheorem{cor}[subsection]{Corollary}
\newtheorem{lem}[subsection]{Lemma}
\newtheorem{prop}[subsection]{Proposition}
\theoremstyle{definition}
\newtheorem{df}[subsection]{Definition}
\newtheorem{rmk}[subsection]{Remark}
\newtheorem{exm}[subsection]{Example}
\newtheorem{const}[subsection]{Construction}
\newcommand{\bE}{\mathbf{E}}
\newcommand{\bM}{\mathbf{M}}
\newcommand{\A}{\mathbb{A}}
\newcommand{\E}{\mathbb{E}}
\newcommand{\F}{\mathbb{F}}
\renewcommand{\L}{\mathbb{L}}
\newcommand{\N}{\mathbb{N}}
\renewcommand{\P}{\mathbb{P}}
\newcommand{\Q}{\mathbb{Q}}
\newcommand{\Z}{\mathbb{Z}}
\newcommand{\cC}{\mathcal{C}}
\newcommand{\cD}{\mathcal{D}}
\newcommand{\cI}{\mathcal{I}}
\newcommand{\cO}{\mathcal{O}}
\newcommand{\cV}{\mathcal{V}}
\newcommand{\rD}{\mathrm{D}}
\newcommand{\rN}{\mathrm{N}}
\DeclareMathOperator{\Hom}{Hom}
\DeclareMathOperator{\Spec}{Spec}
\newcommand{\id}{\mathrm{id}}
\newcommand{\ul}{\underline}
\newcommand{\pt}{\mathrm{pt}}
\newcommand{\lSm}{\mathrm{lSm}}
\newcommand{\lSch}{\mathrm{lSch}}
\newcommand{\Bl}{\mathrm{Bl}}
\newcommand{\Gys}{\mathrm{Gys}}
\newcommand{\Tub}{\mathrm{Tub}}
\newcommand{\Fil}{\mathrm{Fil}}
\newcommand{\gr}{\mathrm{gr}}
\DeclareMathOperator{\Fun}{Fun}
\DeclareMathOperator{\cofib}{cofib}
\newcommand{\lQSyn}{\mathrm{lQSyn}}
\newcommand{\lQRSPerfd}{\mathrm{lQRSPerfd}}
\newcommand{\Poly}{\mathrm{Poly}}
\newcommand{\Sch}{\mathrm{Sch}}
\newcommand{\CycSp}{\mathrm{CycSp}}
\newcommand{\SH}{\mathrm{SH}}
\newcommand{\ex}{\mathrm{ex}}
\newcommand{\HH}{\mathrm{HH}}
\newcommand{\HC}{\mathrm{HC}}
\newcommand{\HP}{\mathrm{HP}}
\newcommand{\THH}{\mathrm{THH}}
\newcommand{\TC}{\mathrm{TC}}
\newcommand{\TP}{\mathrm{TP}}
\newcommand{\BMS}{\mathrm{BMS}}
\newcommand{\Zar}{\mathrm{Zar}}
\newcommand{\syn}{\mathrm{syn}}
\newcommand{\ket}{\mathrm{k\acute{e}t}}
\newcommand{\cdh}{\mathrm{cdh}}
\newcommand{\Th}{\mathrm{Th}}
\newcommand{\KGL}{\mathbf{KGL}}
\newcommand{\KQ}{\mathbf{KQ}}
\newcommand{\MGL}{\mathbf{MGL}}
\DeclareSymbolFontAlphabet{\mathbb}{AMSb} 
\DeclareSymbolFontAlphabet{\mathbbl}{bbold} 
\newcommand{\cPrism}{\widehat{\mathlarger{\mathbbl{\Delta}}}}
\begin{document}
\title{Logarithmic Gysin sequences for regular immersions}
\author{Doosung Park}
\address{Department of Mathematics and Informatics, University of Wuppertal, Germany}
\email{dpark@uni-wuppertal.de}
\subjclass[2020]{Primary 14F42; Secondary 14A21}
\keywords{log motives, Gysin sequences}
\begin{abstract}
For a regular immersion of schemes $Z\to X$ and a cohomology theory of fs log schemes, we formulate the logarithmic Gysin sequence using the ``logarithmic compactification'' $(\mathrm{Bl}_Z X,E)$ instead of the open complement $X-Z$, where $E$ is the exceptional divisor. We show that all $\mathbb{A}^1$-invariant cohomology theories produced from motivic spectra and various non $\mathbb{A}^1$-invariant cohomology theories like Nygaard completed prismatic cohomology admit logarithmic Gysin sequences.
\end{abstract}
\maketitle

\section{Introduction}

Let $\E$ be an $\A^1$-invariant Nisnevich sheaf of spectra on the category of schemes.
A fundamental theorem of Morel-Voevodsky \cite[Theorem 2.23 in \S 3]{MV} implies that if $Z\to X$ is a closed immersion of smooth schemes over a scheme,
then there is a natural fiber sequence
\[
\E(\Th(\rN_Z X))
\to
\E(X)
\to
\E(X-Z),
\]
which is called the \emph{Gysin sequence}.
Here, $\rN_Z X$ denotes the normal bundle of $Z$ in $X$,
and $\Th(\rN_Z X)$ denotes its Thom space.
It is important to have all three terms in the Gysin sequence for computational aspects.

\

If $Z\to X$ is a regular immersion of schemes,
then there is no such a Gysin sequence for general $\E$.
If we further assume that $X$ and $Z$ are regular,
then the existence of a Gysin sequence is a special property for $\E$ characterized by D\'eglise \cite[Definition 1.3.2]{MR3930052} in the motivic setting, which is called the \emph{absolute purity}.
The algebraic $K$-theory spectrum satisfies the absolute purity as a consequence of Quillen's localization sequence for algebraic $K$-theory.
We refer to \cite[Example 4.3.13]{MR4321205} for more examples satisfying the absolute purity.

\

The situation is worse for non $\A^1$-invariant cohomology theories,
e.g., Hodge cohomology, topological Hochschild homology, prismatic cohomology, syntomic cohomology, etc.
The above Gysin sequence does not exist even for the smooth case.
The non-properness of the open immersion $X-Z\to X$ causes such a failure.

\

The logarithmic approach helps in this situation.
For effective Cartier divisors $D_1,\ldots,D_n$ on $X$ given by invertible sheaves of ideals $\cI_1,\ldots,\cI_n$,
let $(X,D_1+\cdots+D_n)$ be the log scheme whose log structure is obtained by the Deligne-Faltings structure \cite[\S III.1.7]{Ogu} associated with the inclusions $\cI_1,\ldots,\cI_n\to \cO_X$.
We consider the \emph{logarithmic compactification} $(\Bl_Z X,E)$ of $X-Z$ in $X$,
where $E$ is the exceptional divisor on the blow-up $\Bl_Z X$.
The logarithmic compactification of the interval $\A^1$ in $\P^1$ is $\square:=(\P^1,\infty)$.
The \emph{logarithmic deformation space} is 
\[
\rD_Z X:=\Bl_{Z\times \{0\}}(X\times \square)-\Bl_{Z\times \{0\}}(X\times \{0\}).
\]
If we use $\A^1$ instead of $\square$ here, then we recover the classical deformation space.
We have the induced diagram of fs log schemes with cartesian squares
\[
\begin{tikzcd}
X\ar[d]\ar[r]&
\rD_Z X\ar[d]\ar[r,leftarrow]&
\rN_Z X\ar[d]
\\
\{1\}\ar[r,"i_1"]&
\square\ar[r,leftarrow,"i_0"]&
\{0\},
\end{tikzcd}
\]
where $i_0$ (resp.\ $i_1$) is the $0$-section (resp.\ $1$-section).
Observe also that $\rD_Z X$ contains $Z\times \square$ as a strict closed subscheme.
Using these replacements,
we can formulate the logarithmic Gysin sequences as follows:

\begin{df}
\label{intro.3}
Let $\E$ be a presheaf on the category of fs log schemes with values in a stable $\infty$-category $\cV$.
We say that $\E$ admits \emph{logarithmic Gysin sequences} if the squares in the induced commutative diagram
\begin{equation}
\label{intro.1.1}
\begin{tikzcd}
\E(X)\ar[d]\ar[r,leftarrow]&
\E(\rD_Z X)\ar[d]\ar[r]&
\E(\rN_Z X)\ar[d]
\\
\E(\Bl_Z X,E)\ar[r,leftarrow]&
\E(\Bl_{Z\times \square}(\rD_Z X),E^D)\ar[r]&
\E(\Bl_Z (\rN_Z X),E^N)
\end{tikzcd}
\end{equation}
are cartesian for every regular immersion of schemes $Z\to X$,
where $E$, $E^D$, and $E^N$ are the exceptional divisors.
In this case,
the induced fiber sequence
\[
\E(\Th(\rN_Z X))
\to
\E(X)
\to
\E(\Bl_Z X)
\]
is called the \emph{logarithmic Gysin sequence},
where
\[
\E(\Th(\rN_Z X))
:=
\cofib(\E(\rN_Z X)\to \E(\Bl_Z (\rN_Z X),E^N)).
\]
By \cite[Proposition 7.4.5]{BPO},
there is a natural isomorphism
\[
\E(\Th(\rN_Z X))
\simeq
\cofib(\E(\P(\rN_Z X\oplus \cO))\to \E(\P(\rN_Z X))).
\]
Hence the \emph{Gysin morphism} $\E(\Th(\rN_Z X))\to \E(X)$ can be written purely in terms of schemes.
\end{df}

The author's joint work with Binda and {\O}stv{\ae}r \cite[\S 7]{BPO} proved that $(\P^\bullet,\P^{\bullet-1})$-invariant strict Nisnevich sheaves admit logarithmic Gysin sequences for the smooth case,
i.e., when $Z\to X$ is a closed immersion of smooth schemes over a scheme $S$.
This is a generalization of \cite[Theorem 2.23 in \S 3]{MV} to the non $\A^1$-invariant setting.
We refer to \cite{BPO2}, \cite{BLPO}, and \cite{BLMP} for the applications to various non $\A^1$-invariant cohomology theories.

\

The purpose of this paper is to show that fairly large classes of cohomology theories admit logarithmic Gysin sequences as follows:

\begin{thm}
\label{intro.2}
The following presheaves $X\mapsto \E(X)$ on the category of fs log schemes admit logarithmic Gysin sequences:
\begin{enumerate}
\item[\textup{(1)}] There exists an object $\bE\in \SH(\Z)$ such that
\[
\E(X)=
\hom_{\SH(X)}(\Sigma^\infty X_+,p^*\bE),
\]
where $p\colon X\to \Spec(\Z)$ is the structure morphism,
\item[\textup{(2)}] $\E(X)=R\Gamma_{\ket}(X,\Z/n)$ for every integer $n>1$,
\item[\textup{(3)}] $\E(X)=R\Gamma_{\Zar}(X,\L\Omega_{X/S}^j)$ for every integer $j$,
\item[\textup{(4)}] $\E(X)=\HH(X),\HC^-(X),\HP(X)$.
\item[\textup{(5)}] $\E(X)=\THH(X),\TC^-(X),\TP(X),\TC(X)$,
\item[\textup{(6)}] $\E(X)=R\Gamma_{\cPrism}(X),R\Gamma_{\syn}(X,\Z_p(d))$ for every integer $d$ with noetherian $X$ such that the $p$-completion $X_p^\wedge$ is log quasi-syntomic.
\end{enumerate}
\end{thm}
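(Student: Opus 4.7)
The plan is to isolate an abstract criterion—strict Nisnevich descent on fs log schemes, a form of $(\P^\bullet,\P^{\bullet-1})$-invariance, and a log blow-up formula along a regular centre—that implies the existence of logarithmic Gysin sequences, and then to verify this criterion separately for each of the six cohomology theories. The smooth-case version of this package is already provided by \cite{BPO}; the bulk of the new geometric work is to extend it from the case where $Z\to X$ is smooth to the case where it is merely regularly embedded, so that $\Bl_Z X$ is only log regular and not necessarily smooth.

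The central reduction is deformation to the normal cone. Both squares of \eqref{intro.1.1} appear as the fibres over $i_1$ and $i_0$ of a single diagram built from $\rD_Z X$, so it suffices to show that $\E$ of the resulting family over $\square$ yields a Cartesian square in $\cV$ and that this is compatible with base change along the two endpoints. For a smooth closed immersion, \cite{BPO} handles this using $(\P^\bullet,\P^{\bullet-1})$-invariance together with the log projective bundle formula along the exceptional divisor $E=\P(\rN_Z X)$. For a regular immersion, $\rN_Z X$ is still a vector bundle on $Z$ and $(\Bl_Z X,E)$ is log regular with exceptional divisor again $\P(\rN_Z X)$, so the same strategy should carry through once one checks that each of the six theories sees only the log regular structure of $\Bl_Z X$. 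With this geometric reduction in place, the individual cases follow from well-known inputs: homotopy purity of Morel--Voevodsky for (1); absolute cohomological purity of Kummer étale sheaves with finite coefficients for (2); the log cotangent complex formula of a regular immersion together with the $\L\Omega^j$ blow-up formula for (3); the log HKR filtration for (4) and (5); and log-prismatic purity under the log-quasi-syntomic assumption for (6), building on \cite{BPO2,BLPO,BLMP}.

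The hardest step, and the one I expect to spend the most care on, is establishing the log blow-up formula along a regular centre for the non $\A^1$-invariant theories in items (3)--(6). There the usual homotopy-theoretic shortcuts are unavailable and one must directly verify that $\E(\Bl_Z X,E)$ is controlled by the projective bundle $\P(\rN_Z X)$ over $Z$. I would prove this as a standalone technical input: descend via strict Nisnevich topology to reduce to the local affine setting, further descend to log quasi-regular semiperfectoid objects where available, and then apply the explicit descriptions of $\HH$, $\THH$, and prismatic cohomology to match both sides. Once this log blow-up formula is in place for each of the listed theories, the uniform framework of the preceding paragraph applies and yields all six cases of Theorem \ref{intro.2} at once.
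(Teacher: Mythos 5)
Your high-level frame (descent $+$ $(\P^\bullet,\P^{\bullet-1})$-invariance $+$ reduction to the smooth case of \cite{BPO}) is the right one, but the proposal leaves unaddressed exactly the step that is the actual content of the theorem: how to pass from a smooth closed immersion to a general regular immersion $Z\to X$, where $X$ and $Z$ need not be smooth over anything. You flag the ``log blow-up formula along a regular centre'' as the hardest step and say you would ``directly verify'' it via descent to affines and to log quasi-regular semiperfectoid objects, but no argument is given, and it is not clear one exists along those lines: $\Bl_Z X$ is not affine, and quasi-syntomic descent does not obviously commute with forming blow-ups. The paper instead uses two concrete devices that your proposal is missing. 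For theories with log cdh descent (cases (1) and (2)), one contracts $(\Bl_Z X,E)$ against the log tubular neighborhood $\Tub_Z X=(\Bl_Z X,E)\times_{\Bl_Z X}E$ via a proper log cdh distinguished square, and then uses the explicit chart computation showing that the log structure on $\Tub_Z X$ (affine locally) depends only on $Z$ and the codimension $n$, not on $X$; this lets one replace $Z\to X$ by the $0$-section $Z\to Z\times\A^n$ and quote the smooth case. For $\HH$, $\THH$ and their descendants (cases (3)--(6)), the device is that the whole diagram $\Gys(Z\to X)$ is a \emph{derived} pullback of $\Gys(\Spec(\Z)\to\A^n)$ along $X\to\A^n$, combined with a base-change/K\"unneth isomorphism for the filtered theories (e.g.\ $\HH((B,N)\otimes^\L_{(A,M)}(C,L)/R)\simeq\HH((B,N)/R)\otimes_{\HH((A,M)/R)}\HH((C,L)/R)$, and its $\THH(-;\Z_p)$ analogue for log quasi-syntomic rings). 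Without one of these mechanisms the reduction to the smooth case does not go through, and that reduction is the theorem.

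Two of your proposed inputs are also off target. For (2), Gabber/Fujiwara--Kato absolute purity requires $n$ invertible, whereas the statement is for all $n>1$ (and the paper stresses that $R\Gamma_{\ket}(-,\Z/n)$ is not even $\A^1$-invariant when $n$ is not invertible); what is actually needed is that $R\Gamma_{\ket}(-,\Z/n)$ is a log cdh sheaf, which the paper proves from the localization sequence and proper base change ($j_!\simeq p_*j'_!$) for Kummer \'etale sites, not from purity. For (1), Morel--Voevodsky homotopy purity only covers smooth closed immersions over a base; the regular-immersion case rests on log cdh descent for $\SH^{\ex}$ (\cite[Corollary 3.6.8]{logshriek}), which is the genuinely hard external input and does not appear in your outline. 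Finally, your opening reduction (``both squares are fibres over $i_0,i_1$ of one family over $\square$, so it suffices to show the family gives a cartesian square'') is not a reduction: the assertion to be proved is precisely that those two squares are cartesian, and no mechanism is offered for deducing this from a statement about the total space of the deformation.
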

\begin{proof}
See Theorem \ref{cdh.6} for (1),
Theorem \ref{cdh.8} for (2),
Corollary \ref{HH.4} for (3),
Theorem \ref{HH.3} and Proposition \ref{HH.5} for (4),
Theorem \ref{THH.5} and Corollary \ref{THH.6} for (5),
and Theorem \ref{THH.3} and Corollary \ref{THH.4} for (6).
\end{proof}

The proof of (1) relies on \cite[Theorem 3.6.7]{logshriek},
whose proof is complicated.
On the other hand,
by formal arguments,
we reduce (3)--(6) to the known smooth cases dealt in \cite{BPO2}, \cite{BLPO}, and \cite{BLMP}.

\

In the case of (1),
we expect that the Gysin morphism
\[
\E(\Th(\rN_Z X))
\to
\E(X)
\]
for regular immersion of schemes $Z\to X$ agrees with the Gysin morphism constructed by D\'eglise-Jin-Khan \cite[4.3.3]{MR4321205}.
This holds at least in the smooth case since in this case,
\cite[Lemma 3.2.15]{MR4321205} implies that their Gysin morphism agrees with the Gysin morphism of Morel-Voevodsky.

\

In \S 3,
we recall the logarithmic absolute purity property,
and we show that this is equivalent to the absolute purity property under certain conditions.
As a consequence,
we prove the following result about the log nearby cycles functor that will be studied further in \cite{lognearby}.
We refer to \cite[Definition 2.3.1]{logshriek} for $\SH^\ex$.

\begin{thm}[See Theorem \ref{absolute.6}]
Let $X$ be a regular log regular log scheme over $B$,
let $i\colon \partial X\to X$ and $j\colon X-\partial X\to X$ be the obvious strict immersions,
and let $\bE$ be an object of $\SH(B)$.
If $\bE$ satisfies the absolute purity,
then $\bE$ satisfies the logarithmic absolute purity in the sense that 
there is a natural isomorphism in $\SH^{\ex}(X)$
\[
\bE\simeq j_*\bE,
\]
where we set $\bE:=p^*\bE$ for every morphism of fs log schemes $p$ whose target is $B$.
Hence there is a natural isomorphism in $\SH^{\ex}(\partial X)$
\[
\bE\simeq \Psi^{\log}\bE,
\]
where $\Psi^{\log}:=i^*j_*$ is the log nearby cycles functor.
\end{thm}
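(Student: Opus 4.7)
The plan is to identify this theorem as a direct consequence of the equivalence between absolute purity and logarithmic absolute purity established earlier in Section~3, together with an application of $i^*$ to extract the nearby cycles statement.

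For the first isomorphism $\bE \simeq j_*\bE$ in $\SH^{\ex}(X)$, my approach is to invoke that equivalence: the hypothesis that $\bE \in \SH(B)$ satisfies absolute purity (in the sense of D\'eglise, recalled in the introduction) feeds in as input, and the equivalence delivers logarithmic absolute purity. By the very definition of the latter, the unit $\bE \to j_*\bE$ is an isomorphism for every regular log regular log scheme $X$ over $B$, with $j$ the strict open immersion of the trivial-log locus. The regular log regular hypothesis on $X$ is precisely one of the ``certain conditions'' mentioned before the statement, required to apply the equivalence.

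For the second isomorphism $\bE \simeq \Psi^{\log}\bE$ in $\SH^{\ex}(\partial X)$, apply $i^*$ to the first. On the right one obtains $i^*j_*\bE = \Psi^{\log}\bE$ by definition of the log nearby cycles functor. On the left, $i^*\bE = i^*p^*\bE = (p \circ i)^*\bE$, which is $\bE$ on $\partial X$ under the convention announced in the theorem, since $p \circ i$ is the structure morphism $\partial X \to B$.

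The hard part lies not in this final deduction but in proving the equivalence back in Section~3. I expect the non-formal direction ``absolute purity $\Rightarrow$ logarithmic absolute purity'' to be handled by: (i) strict Nisnevich descent on $X$, reducing the isomorphism $\bE \simeq j_*\bE$ to local statements; (ii) Kato's structure theorem for log regular log schemes, reducing to the case of a strict normal crossings boundary $\partial X = D_1 + \cdots + D_r$; and (iii) induction on $r$, with each step matching the logarithmic Gysin sequence of Definition~\ref{intro.3} for a smooth boundary component $D_i$ against the scheme-theoretic Gysin sequence supplied by absolute purity of $\bE$ for the regular immersion $D_i \to X$ of underlying schemes.
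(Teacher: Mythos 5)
Your proposal is correct and follows the paper's own route: the paper proves this theorem in one line as an immediate consequence of the equivalence in Theorem~\ref{absolute.4} (whose hypotheses are supplied by Theorem~\ref{cdh.6}), and your sketch of that equivalence's proof --- Zariski-local reduction, Kato's chart/structure theory, induction on the number of boundary components, comparing logarithmic and classical Gysin squares --- matches Proposition~\ref{absolute.3} and Theorem~\ref{absolute.4} in substance. The only small imprecision is that the ``certain conditions'' are the sheaf-theoretic hypotheses ($(\P^\bullet,\P^{\bullet-1})$-invariance, $\A^1$-invariance, log cdh descent) rather than the regular log regular assumption, which is already built into Definition~\ref{absolute.1}.
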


In particular,
if $X$ is the fs log scheme $(\Spec(\Z_p),\Spec(\F_p))$,
then the above functor $\Psi^{\log}$ creates the homotopy $K$-theory spectra of log smooth schemes over the standard log point $\pt_{\N,\F_p}$ from the homotopy $K$-theory spectra of smooth schemes over $\Spec(\Q_p)$.

\subsection*{Acknowledgements}
The author wishes to thank Adeel Khan for a helpful comment on this paper.
This research was conducted in the framework of the DFG-funded research training group GRK 2240: \emph{Algebro-Geometric Methods in Algebra, Arithmetic and Topology}.

\subsection*{Notation and conventions}

Our standard reference for log geometry is Ogus's book \cite{Ogu}.
We assume that every fs log scheme in this paper has a Zariski log structure unless otherwise stated.
We employ the following notation throughout this paper:

\begin{tabular}{l|l}
$B$ & a base scheme
\\
$\Sch$ & the category of schemes
\\
$\lSch$ &  the category of fs log schemes
\\
$\Hom_{\cC}$ & the hom space in an $\infty$-category $\cC$
\\
$\hom_{\cC}$ & the hom spectra in a stable $\infty$-category $\cC$
\\
$\cV$ & a stable $\infty$-category
\end{tabular}

\section{Log cdh sheaves}

In this section,
we show that $(\P^\bullet,\P^{\bullet-1})$-invariant log cdh sheaves admit logarithmic Gysin sequences.
The cohomology theory of fs log schemes produced by a motivic spectrum in the sense of Theorem \ref{cdh.6} is such an example.
Kummer \'etale cohomology $R\Gamma_{\ket}(X,\Z/n)$ is another example, which is not $\A^1$-invariant if $n$ is not invertible in $X$.

A presheaf $\E$ on $\lSch$ with values in $\cV$ is \emph{$(\P^\bullet,\P^{\bullet-1})$-invariant} if the induced morphism
\[
\E(X)
\to
\E(X\times (\P^n,\P^{n-1}))
\]
is an isomorphism for $X\in \lSch$ and integer $n\geq 1$,
where we regard $\P^{n-1}$ as the divisor on $\P^n$ at $\infty$.
Similarly, $\E$ is \emph{$\square$-invariant} if the induced morphism
\[
\E(X)
\to
\E(X\times \square)
\]
is an isomorphism for $X\in \lSch$.

We refer to \cite[Definition 2.3.13]{BPO2} for the \emph{strict Nisnevich cd-structure}, \emph{dividing Nisnevich cd-structure}, \emph{strict Nisnevich topology}, and \emph{dividing Nisnevich topology}.

\begin{df}
\label{cdh.1}
A \emph{proper log cdh distinguished square} is a cartesian square of fs log schemes
\[
Q:=
\begin{tikzcd}
W\ar[d]\ar[r]&
Y\ar[d,"f"]
\\
Z\ar[r,"i"]&
X
\end{tikzcd}
\]
such that $i$ is a strict closed immersion, $f$ is proper (i.e., the underlying morphism of schemes $\ul{f}$ is proper), and the induced morphism $f^{-1}(X-Z)\to X-Z$ is an isomorphism.

The \emph{log cdh cd-structure} is the collection of proper log cdh and strict Nisnevich distinguished squares.

The \emph{log cdh topology} is the smallest topology finer than the Zariski topology and the topology associated with the log cdh cd-structure.
\end{df}

\begin{rmk}
\label{cdh.2}
The log cdh cd-structure is complete and regular in the sense of \cite[\S 2]{Vcdtop}.
Hence a Zariski sheaf $\E$ on the category of fs log schemes with values in $\cV$ is a log cdh sheaf (i.e., $\E$ satisfies the \v{C}ech descent for the log cdh topology) if and only if $\E(Q)$ is cartesian for all proper log cdh and strict Nisnevich distinguished squares using the argument in \cite[\S 5]{Vcdtop}, see also \cite[Theorem A.4.2]{logGysin}.
\end{rmk}

\begin{exm}
\label{cdh.3}
The cartesian square
\[
\begin{tikzcd}
\pt_\N\ar[d]\ar[r]&
\A_\N\ar[d]
\\
\{0\}\ar[r,"i_0"]&
\A^1
\end{tikzcd}
\]
is a proper log cdh distinguished square,
where $i_0$ is the $0$-section.
\end{exm}

\begin{df}
Let $Z\to X$ be a regular immersion of schemes.
The \emph{log tubular neighborhood of $Z$ in $X$} is
\[
\Tub_Z X
:=
(\Bl_Z X,E)\times_{\Bl_Z X} E,
\]
where $E$ is the exceptional divisor on $\Bl_Z X$.
\end{df}

\begin{exm}
\label{cdh.4}
Let us investigate the log structure on the log tubular neighborhood in the affine case as follows.
Let $A$ be a commutative ring with a regular sequence $a_1,\ldots,a_n$.
Consider the regular immersion of schemes $Z:=\Spec(A/(a_1,\ldots,a_n))\to X:=\Spec(A)$.
We have the Zariski covering $\{U_1,\ldots,U_n\}$ of $\Bl_Z X$ with
\[
U_i:=\Spec{B_i},
\text{ }
B_i:=A[a_1/a_i,\ldots,a_n/a_i]
\]
for $i=1,\ldots,n$.
For every nonempty subset $I$ of $\{1,\ldots,n\}$,
we set $U_I:=\bigcap_{i\in I} U_i$,
which is naturally isomorphic to $\Spec(B_I)$ with
\[
B_I
:=
A[(a_j/a_i)_{i\in I,1\leq j\leq n}].
\]

Let $J_I$ be the ideal of $B_I$ defining $E\times_{\Bl_Z X}U_I$,
and let $Q_I$ be the set of pairs $(x,n)$ such that $n\in \N$ and $x\in J_I^n$.
Observe that $Q_I$ has a natural monoid structure.
We have the map $Q_I\to B_I$ sending $(x,n)$ to $x$.
Using the description of the log structures associated with Deligne-Faltings structures in \cite[\S III.1.7]{Ogu},
we have an isomorphism
\[
(\Bl_Z X,E)\times_{\Bl_Z X}U_I
\simeq
\Spec(B_I,Q_I)
\]
that is natural in $I$. 

Let $P_I$ be the submonoid of $\Z^n=\Z e_1\oplus \cdots \oplus \Z e_n$ generated by $e_i$ and $e_i-e_j$ for $i,j\in I$.
We have the natural map $P_I\to B_I$ sending $e_i$ to $a_i$ and $e_i-e_j$ to $a_i/a_j$.
Furthermore,
$P_I$ is a chart of $\Spec(B_I,Q_I)$.
Hence we have a natural isomorphism
\[
(\Bl_Z X,E)\times_{\Bl_Z X}U_I
\simeq
\Spec(B_I,P_I).
\]
In particular,
the log structure on $(\Bl_Z X,E)$ is fine saturated.

Now, we set $C:=A/(a_1,\ldots,a_n)$ and
\[
C_I:=C[(t_i)_{i\in I},(t_j/t_i)_{i\in I,1\leq j\leq n}],
\]
where $t_1,\ldots,t_n$ are indeterminates.
Observe that $C_I$ is naturally isomorphic to $B_I/(a_1,\ldots,a_n)$.
The composite map $P_I\to C_I$ sends $e_i$ to $0$ and $e_i-e_j$ to $t_i/t_j$.
Furthermore,
we have a natural isomorphism
\[
\Tub_Z X\times_{\Bl_Z X} U_I
\simeq
\Spec(C_I,P_I).
\]

This implies that the log structure on $\Tub_Z X$ is uniquely determined by $Z$ and $n$ and independent of the choice of $X$.

Note that this claim is only for the affine case.
If $Z$ is not affine,
then $\Tub_Z X$ is not uniquely determined by $Z$ and $n$.
\end{exm}

Recall the following fundamental result on the theme of logarithmic Gysin sequences:

\begin{thm}
\label{cdh.7}
Let $\E$ be a presheaf on $\lSch$ with values in $\cV$.
Then $\E$ is a $(\P^\bullet,\P^{\bullet-1})$-invariant strict Nisnevich sheaf if and only if $\E$ is a $\square$-invariant dividing Nisnevich sheaf.
In this case,
for every closed immersion of smooth schemes $Z\to X$ over a scheme $S$,
the squares in \eqref{intro.1.1} are cartesian.
\end{thm}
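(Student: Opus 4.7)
The plan is to recognize this as essentially a compilation of results from the author's joint work with Binda and {\O}stv{\ae}r \cite{BPO}, so the proof consists mainly of citations together with observations matching conventions. There are two independent assertions: the equivalence of the two sheaf conditions, and the cartesianness of the squares in \eqref{intro.1.1} for closed immersions of smooth schemes.

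For the equivalence, the easy implications are immediate: the dividing Nisnevich topology refines the strict Nisnevich topology, so every dividing Nisnevich sheaf is strictly Nisnevich; and since $\square = (\P^1,\P^0)$, every $(\P^\bullet,\P^{\bullet-1})$-invariant presheaf is $\square$-invariant. Each direction therefore reduces to a single nontrivial upgrade. Promoting strict Nisnevich descent to dividing Nisnevich descent under $(\P^\bullet,\P^{\bullet-1})$-invariance uses that every log blow-up along an admissible monoidal center is, on strict Nisnevich charts (as illustrated by Example \ref{cdh.4}), toric and can be expressed in terms of $(\P^n,\P^{n-1})$-bundles relative to the exceptional divisor, so that $(\P^n,\P^{n-1})$-invariance supplies the required descent. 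Conversely, promoting $\square$-invariance to $(\P^n,\P^{n-1})$-invariance proceeds by induction on $n$: one exhibits a suitable dividing cover of $(\P^n,\P^{n-1})$ whose strict Nisnevich pieces are products of copies of $\square$, and then applies dividing Nisnevich descent together with $\square$-invariance. Both arguments are carried out in BPO.

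For the cartesianness of the squares in \eqref{intro.1.1} for a closed immersion $Z \to X$ of smooth schemes over $S$, this is the content of the Gysin section of BPO. The logarithmic deformation space $\rD_Z X$ fibers over $\square$ with scheme-theoretic fibers $X$ at $\{1\}$ and $\rN_Z X$ at $\{0\}$, and the same structure persists after taking the logarithmic compactification along $Z\times\square$. Under the equivalent sheaf conditions of the first part, $\square$-invariance applied uniformly to both rows of \eqref{intro.1.1} — once for the scheme row and once for the blow-up row — forces the restriction maps $\rD_Z X \to X$ and $\rD_Z X \leftarrow \rN_Z X$ to induce equivalences after pairing with the appropriate blow-up columns, which is exactly the cartesianness claimed. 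The main obstacle, already overcome in BPO, is the explicit strict Nisnevich local analysis of the three blow-ups $\Bl_Z X$, $\Bl_{Z\times \square}(\rD_Z X)$, and $\Bl_Z(\rN_Z X)$ together with their exceptional divisors; once this local model is in hand, $\square$-invariance and dividing Nisnevich descent close the argument formally.
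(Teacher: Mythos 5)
Your proposal takes essentially the same route as the paper: both parts are deferred to the results of \cite{BPO} (the equivalence of the two invariance-plus-descent conditions, and the cartesianness of the squares in \eqref{intro.1.1} for smooth closed immersions), and your sketches of the internal mechanisms — the easy implications, the toric/blow-up local models, and deformation to the normal bundle over $\square$ — are consistent with what happens there.

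The one ingredient you omit is that the results of \cite{BPO} you are invoking are established only for \emph{noetherian} fs log schemes, whereas the theorem here is asserted for all of $\lSch$. The paper closes this gap by the noetherian approximation argument of \cite[\S 3.1]{BPO2}, as in \cite[Theorems 3.2.21, 3.5.6, 3.5.7]{BPO2}: one writes a general fs log scheme as a cofiltered limit of noetherian ones with affine transition maps and checks that the relevant squares, blow-ups, and exceptional divisors are compatible with this limit. As written, your argument only establishes the noetherian case; you should either add this reduction or restrict the statement accordingly.
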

\begin{proof}
For the noetherian case,
this is a direct consequence of \cite[Proposition 7.3.1, Theorems 7.5.4, 7.7.4]{BPO}.
As in \cite[Theorems 3.2.21, 3.5.6, 3.5.7]{BPO2},
use the noetherian reduction argument in  \cite[\S 3.1]{BPO2}.
\end{proof}

With log cdh descent that is stronger than strict Nisnevich descent,
we have logarithmic Gysin sequences for regular immersions as follows:

\begin{thm}
\label{cdh.5}
Let $\E$ be a $(\P^\bullet,\P^{\bullet-1})$-invariant log cdh sheaf on $\lSch$ with values in $\cV$.
Then $\E$ admits logarithmic Gysin sequences.
\end{thm}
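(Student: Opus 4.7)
The plan is to reduce the general regular immersion $Z\to X$ to the case of the zero section of the normal bundle $Z\to \rN_Z X$, via the log deformation space $\rD_Z X$ and log cdh descent. The key observation is that the zero section is automatically a closed immersion of schemes smooth over $Z$ (the identity on $Z$ and the vector bundle projection on $\rN_Z X$), so that Theorem \ref{cdh.7}, applied with base $S=Z$, yields the Gysin diagram \eqref{intro.1.1} cartesian for the pair $(Z\to \rN_Z X)$. This takes care of the rightmost column of \eqref{intro.1.1}, and the task becomes transferring this to the other two columns.

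The first step is to exploit log cdh descent on blow-up squares. For each of the three regular immersions $Y\to W$ appearing as columns of \eqref{intro.1.1}, namely $(Z\to X)$, $(Z\times\square\to \rD_Z X)$, and $(Z\to \rN_Z X)$, the square
\[
\begin{tikzcd}
E'\ar[d]\ar[r] & (\Bl_Y W, E')\ar[d]\\
Y\ar[r] & W
\end{tikzcd}
\]
is a proper log cdh distinguished square in the sense of Definition \ref{cdh.1}: the blow-up is proper and an isomorphism away from $Y$, while $Y\to W$ is a strict closed immersion. Applying $\E$ therefore makes it cartesian, identifying the cofiber of the left vertical map in each column of \eqref{intro.1.1} with $\cofib(\E(Y)\to \E(E'))$.

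The second step is to match the three cofibers via the deformation to the normal bundle. By Theorem \ref{cdh.7}, $\E$ is $\square$-invariant. The fibers of $\rD_Z X\to \square$ at $\{1\}$ and $\{0\}$ are $X$ and $\rN_Z X$ respectively, and the local computation of Example \ref{cdh.4} shows that the exceptional divisor $E^D$ with its log structure restricts compatibly to $E$ and $E^N$ over these two points, while the closed subscheme $Z\times \square$ restricts to $Z$. Combined with $\square$-invariance applied both to $Z\times\square$ and to the family $E^D\to \square$, this should yield equivalences of cofibers along the top and bottom rows of \eqref{intro.1.1}, which together with the cartesianness of the rightmost column forces the other two squares to be cartesian. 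The main obstacle lies in making this $\square$-invariance argument precise on $E^D$, which carries a nontrivial log structure and lives over $\square$ but is not literally a product with $\square$; one must carefully combine $\square$-invariance with strict Nisnevich (or dividing Nisnevich) descent to globalize the local identifications of Example \ref{cdh.4} from the affine charts to the whole exceptional divisor.
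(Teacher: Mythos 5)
Your first step is correct and is exactly the paper's: for each of the three columns of \eqref{intro.1.1} the blow-up square with the log tubular neighborhood $\Tub_Y W=(\Bl_Y W,E')\times_{\Bl_Y W}E'$ in the corner is a proper log cdh distinguished square, so log cdh descent identifies the cofiber of $\E(W)\to \E(\Bl_Y W,E')$ with $\cofib(\E(Y)\to \E(\Tub_Y W))$, and (using $\square$-invariance for $\E(Z\times\square)\simeq\E(Z)$) the cartesianness of, say, the left square of \eqref{intro.1.1} reduces to the single assertion that $\E(\Tub_{Z\times\square}(\rD_Z X))\to \E(\Tub_Z X)$ is an equivalence.

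The gap is in your second step, and it is the one you flag yourself. ``$\square$-invariance applied to the family $E^D\to\square$'' is not a licensed move: $\square$-invariance is a statement about literal products $Y\times\square$, and $\Tub_{Z\times\square}(\rD_Z X)$ is not presented as one; gluing the chart-by-chart identifications of Example \ref{cdh.4} by Nisnevich descent does not by itself produce the required equivalence either. The idea that closes the gap in the paper is of a different nature: first localize Zariski on $X$ (legitimate since $\E$ is in particular a Zariski sheaf) so that $Z\subset X$ is cut out by a regular sequence $a_1,\dots,a_n$; then the computation of Example \ref{cdh.4} shows that $\Tub_Z X$, and likewise $\Tub_{Z\times\square}(\rD_Z X)$, is as an fs log scheme determined by $Z$ and $n$ alone, \emph{independently of the ambient $X$}. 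Consequently the map $\E(\Tub_{Z\times\square}(\rD_Z X))\to\E(\Tub_Z X)$ is literally identified with the corresponding map for the model immersion $Z\to Z\times\A^n$, where the whole of \eqref{intro.1.1} is cartesian by Theorem \ref{cdh.7}, so that map is an equivalence. In other words, instead of trying to prove a homotopy-invariance statement for the non-product $E^D$ over $\square$, one replaces the entire ambient space by the model case and quotes the smooth theorem; you should substitute this for your final paragraph (and add the initial Zariski localization, without which the independence claim of Example \ref{cdh.4} is not available).
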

\begin{proof}
Let $Z\to X$ be a regular immersion of schemes.
We will only focus on showing that the left square of \eqref{intro.1.1} for $Z\to X$ is cartesian since the proof for the right square is similar.
Since $\E$ is a strict Nisnevich sheaf,
the question is Zariski local on $X$.
Hence we may assume that $X$ is an affine regular scheme $\Spec(A)$ and $Z$ is given by a regular sequence $a_1,\ldots,a_n$ in $A$.
Consider
\[
\Tub_{Z\times \square}(\rD_Z X)
:=
(\Bl_{Z\times \square} (\rD_Z X),E^D)\times_{\Bl_{Z\times \square} (\rD_Z X)} E^D,
\]
where $E^D$ is the exceptional divisor.
The squares
\[
\begin{tikzcd}
\Tub_Z X\ar[d]\ar[r]&
(\Bl_Z X,E)\ar[d]
\\
Z\ar[r]&
X,
\end{tikzcd}
\text{ }
\begin{tikzcd}
\Tub_{Z\times \square}(\rD_Z X)\ar[d]\ar[r]&
(\Bl_{Z\times \square}(\rD_Z X),E^D)\ar[d]
\\
Z\times \square\ar[r]&
\rD_Z X,
\end{tikzcd}
\]
are log cdh distinguished squares.
Since $\E$ is a log cdh sheaf,
the squares
\[
\begin{tikzcd}
\E(X)\ar[d]\ar[r]&
\E(\Bl_Z X,E)\ar[d]
\\
\E(Z)\ar[r]&
\E(\Tub_Z X),
\end{tikzcd}
\text{ }
\begin{tikzcd}
\E(\rD_Z X)\ar[d]\ar[r]&
\E(\Bl_{Z\times \square}(\rD_Z X),E^D)\ar[d]
\\
\E(Z\times \square)\ar[r]&
\E(\Tub_{Z\times \square}(\rD_Z X))
\end{tikzcd}
\]
are cartesian.
Hence the claim that the left square of \eqref{intro.1.1} is cartesian is equivalent to the claim that the square
\[
\begin{tikzcd}
\E(Z)\ar[d]\ar[r]&
\E(Z\times \square)\ar[d]
\\
\E(\Tub_Z X)\ar[r]&
\E(\Tub_{Z\times \square}(\rD_Z X))
\end{tikzcd}
\]
is cartesian,
which is equivalent to the claim that $\E(\Tub_Z X)\to \E(\Tub_{Z\times \square}(\rD_Z X))$ is an isomorphism by $(\P^\bullet,\P^{\bullet-1})$-invariance.

Example \ref{cdh.4} shows that the log structure on $\Tub_Z X$ is uniquely  determined by $Z$ and $n$.
Similarly,
the log structure on $\Tub_{Z\times \square}(\rD_Z X)$ is uniquely determined by $Z$ and $n$.
Hence to show that $\E(\Tub_Z X)\to \E(\Tub_{Z\times \square}(\rD_Z X))$ is an isomorphism or equivalently the left square of \eqref{intro.1.1} is cartesian,
we may assume that $Z\to X$ is given by the $0$-section $Z\to Z\times \A^n$.
This smooth case is done by Theorem \ref{cdh.7}.
\end{proof}

\begin{rmk}
\label{cdh.9}
Let $\cC$ be a full subcategory of $\lSch$,
and let $Z\to X$ be a regular immersion of schemes in $\cC$.
Consider the commutative diagram of fs log schemes
\[
\Gys(Z\to X)
:=
\begin{tikzcd}
(\Bl_Z X,E)\ar[d]\ar[r]&
(\Bl_{Z\times \square}(\rD_Z X),E^D)\ar[r,leftarrow]\ar[d]&
(\Bl_Z(\rN_Z X),E^N)\ar[d]
\\
X\ar[r]&
\rD_Z X\ar[r,leftarrow]&
\rN_Z X.
\end{tikzcd}
\]
If every fs log schemes in $\Gys(Z\to X)$ belongs to $\cC$ for every regular immersion of schemes $Z\to X$ in $\cC$,
then we can discuss logarithmic Gysin sequences within $\cC$ as in Definition \ref{intro.3}.
A typical example of $\cC$ we will use is $\lSch/B$,
where $B$ is a base scheme.
\end{rmk}

We refer to \cite[Definition 2.5.5]{logA1} for the definition of $\SH(X)$ for fs log scheme $X$,
which is an extension of the original $\SH$ for schemes due to Morel-Voevodsky \cite{MV}.

\begin{thm}
\label{cdh.6}
Let $B$ be a scheme.
For every object $\bE$ of $\SH(B)$,
the presheaf $\E$ of spectra on $\lSch/B$ given by
\[
X\mapsto \E(X):=\hom_{\SH(X)}(\Sigma^\infty X_+,p^*\bE)
\]
is a $(\P^\bullet,\P^{\bullet-1})$-invariant log cdh sheaf,
where $p\colon X\to B$ is the structure morphism.
Hence it admits logarithmic Gysin sequences.
\end{thm}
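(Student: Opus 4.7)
The plan is to verify the two hypotheses of Theorem~\ref{cdh.5}: that $\E$ is $(\P^\bullet,\P^{\bullet-1})$-invariant and that it is a log cdh sheaf. The existence of logarithmic Gysin sequences is then immediate from that theorem. By Remark~\ref{cdh.2}, the log cdh sheaf property decomposes into the Zariski sheaf property, strict Nisnevich descent, and descent along proper log cdh distinguished squares.

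The $(\P^\bullet,\P^{\bullet-1})$-invariance, together with strict Nisnevich and Zariski descent, are formal consequences of the construction of $\SH(X)$ from \cite[Definition 2.5.5]{logA1}. For a log smooth morphism $q \colon X' \to X$ the adjunction $q_\# \dashv q^*$ yields
\[
\hom_{\SH(X')}(\Sigma^\infty X'_+, q^* p^* \bE)
\simeq
\hom_{\SH(X)}(q_\# \Sigma^\infty X'_+, p^* \bE),
\]
and by construction $\SH(X)$ is $(\P^\bullet,\P^{\bullet-1})$-invariant and strict Nisnevich local at the level of suspension spectra. Applying $\hom_{\SH(X)}(-, p^*\bE)$ then produces both properties for $\E$.

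The substantive step is descent for a proper log cdh distinguished square with $i \colon Z \to X$ a strict closed immersion, $f \colon Y \to X$ proper, and $f$ an isomorphism over the open complement $j \colon U := X - Z \to X$. The strategy is to apply the open--closed recollement in $\SH(X)$ to $p^*\bE$, giving the localization triangle $j_! j^* p^*\bE \to p^*\bE \to i_* i^* p^*\bE$, and to compare it with the analogous triangle on $Y$ via pullback along $f$. Proper base change for $f$ together with the hypothesis that $f^{-1}(U) \to U$ is an isomorphism then forces the resulting Mayer--Vietoris square to be cartesian after applying $\hom_{\SH(X)}(\Sigma^\infty X_+, -)$, which is exactly the required descent for $\E$.

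The main obstacle, and the nontrivial input of the argument, is ensuring that $\SH$ on fs log schemes carries the full six-functor formalism with the requisite proper base change and open--closed recollement. This is precisely the content of \cite[Theorem 3.6.7]{logshriek}, whose rather intricate proof is the reason (1) of Theorem~\ref{intro.2} is the hardest case; once that result is invoked, the argument above goes through and Theorem~\ref{cdh.5} delivers the logarithmic Gysin sequences.
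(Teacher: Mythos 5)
Your overall route is the same as the paper's: reduce to Theorem \ref{cdh.5} and verify that $\E$ is a $(\P^\bullet,\P^{\bullet-1})$-invariant log cdh sheaf, with the descent statement ultimately resting on the six-functor formalism of \cite{logshriek}. The paper simply cites \cite[Corollary 3.6.8]{logshriek} for the log cdh sheaf property; your recollement-plus-proper-base-change argument for proper log cdh distinguished squares is essentially the content of that corollary, and it is literally the argument the paper itself runs for Kummer \'etale cohomology in the proof of Theorem \ref{cdh.8} (fibers of the rows are $j_!j^*$ and $f_*j'_!j'^{*}f^*$, then $j_!\simeq f_*j'_!$ since $f$ is proper and an isomorphism over $U$). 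There are two points where your sketch glosses over something the paper is careful about. First, the open--closed recollement $j_!j^*\to\id\to i_*i^*$ is not available in the full category $\SH(X)$ of an fs log scheme---localization is precisely what is problematic in log motivic homotopy theory---so the argument must be run in the full subcategory $\SH^{\ex}(X)$ of \cite[Definition 2.3.1]{logshriek}; this is harmless here because $\SH^{\ex}(B)=\SH(B)$ and $\Sigma^\infty X_+\in\SH^{\ex}(X)$, whence $\E(X)\simeq\hom_{\SH^{\ex}(X)}(\Sigma^\infty X_+,p^*\bE)$, but it needs to be said. Second, $(\P^\bullet,\P^{\bullet-1})$-invariance is not quite ``by construction'' of $\SH(X)$: the paper derives it from $\A^1$-invariance of $\E$ together with the vertical-invariance property (ver-inv) of \cite[Theorem 2.3.5]{logshriek}. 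Neither point breaks your argument; they are exactly the places where the nontrivial input from \cite{logshriek} enters, and with those adjustments your proof matches the intended one.
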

\begin{proof}
By Theorem \ref{cdh.5}, it suffices to show the first claim.
Consider the full subcategory $\SH^\ex(X)$ of $\SH(X)$ in \cite[Definition 2.3.1]{logshriek}.
Since $\SH^\ex(B)=\SH(B)$ and $\Sigma^\infty X_+\in \SH^\ex(X)$,
we have
\[
\E(X)\simeq \hom_{\SH^\ex(X)}(\Sigma^\infty X_+,p^*\bE).
\]
Together with \cite[Corollary 3.6.8]{logshriek},
we deduce that $\E$ is a log cdh sheaf.

Since $\E$ is $\A^1$-invariant,
($ver$-inv) in \cite[Theorem 2.3.5]{logshriek} implies that $\E$ is $(\P^\bullet,\P^{\bullet-1})$-invariant.
\end{proof}

The next result shows that there exists a non $\A^1$-invariant cohomology theory to which Theorem \ref{cdh.5} is applicable.

\begin{thm}
\label{cdh.8}
For every integer $n>1$,
the presheaf of complexes $R\Gamma_{\ket}(-,\Z/n)$ on $\lSch$ is a $(\P^\bullet,\P^{\bullet-1})$-invariant log cdh sheaf.
Hence it admits logarithmic Gysin sequences.
\end{thm}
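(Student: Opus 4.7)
My plan is to apply Theorem \ref{cdh.5} directly: once I show that $\E := R\Gamma_\ket(-,\Z/n)$ is both a log cdh sheaf and $(\P^\bullet,\P^{\bullet-1})$-invariant, the logarithmic Gysin sequence statement follows formally. Hence the task reduces to verifying these two properties separately for Kummer étale cohomology with finite coefficients.

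For the sheaf condition, strict Nisnevich descent comes for free because the Kummer étale topology refines the strict Nisnevich topology, so $\E$ is automatically a strict Nisnevich sheaf by restriction. By Remark \ref{cdh.2}, log cdh descent then reduces to checking that $\E(Q)$ is cartesian for every proper log cdh distinguished square $Q$, and I would obtain this from logarithmic proper base change with torsion coefficients: for $f\colon Y\to X$ proper and $i\colon Z\to X$ a strict closed immersion with $f^{-1}(X-Z)\to X-Z$ an isomorphism, the base change identification $i^*Rf_*\Z/n \simeq Rf'_*\Z/n$ together with $Rf_*\Z/n\simeq \Z/n$ over $X-Z$ yields the required cartesian square.

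For $(\P^\bullet,\P^{\bullet-1})$-invariance I would invoke the criterion in Theorem \ref{cdh.7} and instead check that $\E$ is a $\square$-invariant dividing Nisnevich sheaf. Granted strict Nisnevich descent, dividing Nisnevich descent reduces to the invariance of $\E$ under log blow-ups, a classical Kummer étale statement ($Rf_*\Z/n\simeq \Z/n$ for a log blow-up $f$, cf.\ work of Nakayama and Illusie). Square-invariance $R\Gamma_\ket(X,\Z/n) \xrightarrow{\sim} R\Gamma_\ket(X\times \square,\Z/n)$ I would derive by a Leray argument for the projection $X\times \square \to X$, reducing to the computation $R\Gamma_\ket(\square,\Z/n)=\Z/n$, which in turn follows from the Kummer étale triviality of $(\P^1,\infty)$ along $\infty$.

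The step I expect to be the main obstacle is securing log proper base change and log blow-up invariance for $\E$ in the generality demanded by the statement, namely for arbitrary $n>1$ with no invertibility hypothesis on the fs log schemes under consideration. When $n$ is invertible on $X$ both statements reduce rapidly to their classical étale analogues by passing to a Kummer cover trivializing the log structure; for general $n$ the appropriate references in the log étale literature (Nakayama, Illusie, Kato--Nakayama) must be quoted with care, and it is this careful invocation of log étale machinery rather than any new argument that carries the weight of the proof.
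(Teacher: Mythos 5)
Your proposal matches the paper's proof in all essentials: both reduce to Theorem \ref{cdh.5}, deduce log cdh descent from Nakayama's localization and proper base change for the Kummer \'etale topos (the paper phrases this as $j_!\simeq p_*j'_!$, which is your base-change statement in adjoint form), and obtain $(\P^\bullet,\P^{\bullet-1})$-invariance from $\square$-invariance together with dividing Nisnevich descent, i.e.\ log blow-up invariance. The only soft spot is your Leray reduction of $\square$-invariance to $R\Gamma_{\ket}(\square,\Z/n)=\Z/n$: the fibers of $X\times \square\to X$ over log geometric points carry nontrivial log structure, so the fiberwise computation must be carried out over log points (where tameness of Kummer covers kills the wild $p$-part for non-invertible $n$), whereas the paper simply quotes the known $\square$-invariance and dividing Nisnevich descent of Kummer \'etale cohomology from the literature.
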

\begin{proof}
By Theorem \ref{cdh.5}, it suffices to show the first claim.
For an fs log scheme $X$,
consider the presheaf of complexes on $\lSm/\ul{X}$ given by
\[
Y
\mapsto
R\Gamma_{\ket}(Y\times_{\ul{X}}X,\Z/n).
\]
This is a dividing Nisnevich sheaf by \cite[Proposition 5.4(2)]{MR3658728} and $\square$-invariant by Theorem \ref{cdh.7},
so it is $(\P^\bullet,\P^{\bullet-1})$-invariant by \cite[Proposition 7.3.1]{BPO}.
Hence $R\Gamma_{\ket}(-,\Z/n)$ is $(\P^\bullet,\P^{\bullet-1})$-invariant too.

To show that $R\Gamma_{\ket}(-,\Z/n)$ is a log cdh sheaf,
consider the presheaf of $\infty$-categories on $\lSch$ assigning the small Kummer \'etale site $X_{\ket}$ for $X\in \lSch$ and the functor $f^*\colon X_{\ket} \to Y_{\ket}$ for a morphism $f\colon Y\to X$ in $\lSch$.
Consider a commutative diagram consisting of cartesian squares in $\lSch$
\[
\begin{tikzcd}
Z'\ar[d,"q"']\ar[r,"i'"]&
X'\ar[d,"p"]\ar[r,leftarrow,"j'"]&
U\ar[d,"\id"]
\\
Z\ar[r,"i"]&
X\ar[r,leftarrow,"j"]&
U
\end{tikzcd}
\]
such that the left square is a log cdh distinguished square and $j$ is the open complement of a strict closed immersion $i$.
We need to show that the induced square
\[
\begin{tikzcd}
\id\ar[d]\ar[r]&
i_*i^*\ar[d]
\\
p_*p^*\ar[r]&
r_*r^*
\end{tikzcd}
\]
with $r:=iq$ is cartesian.
The fibers of the rows are $j_! j^*$ and $p_*j_!'j'^*p^*$ by the localization property \cite[\S 2.8]{MR1457738}.
To conclude,
observe that we have $j_!\simeq p_*j_!'$ by \cite[\S 5.4]{MR1457738}.
\end{proof}

\begin{rmk}
Unlike $\A^1$-invariant cohomology theories,
there are lots of non $\A^1$-invariant cohomology theories of schemes that do not satisfy cdh descent, e.g., $\THH$.
This is the reason why we deal with $\THH$ and its variants separately in \S \ref{HH} and \ref{THH} below.
On the other hand,
the cdh sheafification $L_\cdh \THH$ is still an interesting non $\A^1$-invariant cohomology theory.
We do not know the existence of a $(\P^\bullet,\P^{\bullet-1})$-invariant log cdh sheaf of complexes on $\lSch$ extending $L_\cdh \THH$ from schemes to fs log schemes.
\end{rmk}

\section{Logarithmic absolute purity}

The purpose of this section is to recall the logarithmic absolute purity property and show that it is equivalent to the absolute purity property under certain conditions.

For an fs log scheme $X$,
let $\partial X$ denote the set of points $x$ of $X$ such that the log structure on $X$ is nontrivial at $x$.
We regard $\partial X$ as a strict closed subscheme of $X$ with the reduced scheme structure.

We say that an fs log scheme $X$ is \emph{regular log regular} if its underlying scheme $\ul{X}$ is regular and $X$ is log regular.

Recall that $B$ is a base scheme.

\begin{df}
\label{absolute.1}
A presheaf $\E$ on $\lSch/B$ with values in $\cV$ satisfies the \emph{logarithmic absolute purity} if for every regular log regular $X\in \lSch/B$,
the induced morphism
\[
\E(X)
\to
\E(X-\partial X)
\]
is an isomorphism.
\end{df}

\begin{rmk}
If a presheaf $\E$ on $\lSch/B$ satisfies $(\P^\bullet,\P^{\bullet-1})$-invariance and the logarithmic absolute purity,
then $\E$ is $\A^1$-invariant for regular log regular $X\in \lSch/B$:
The morphism
\[
p^*\colon \E(X)\to \E(X\times \A^1)
\]
induced by the projection $p\colon X\times \A^1\to X$ is an isomorphism.
\end{rmk}

\begin{prop}
\label{absolute.3}
A $(\P^\bullet,\P^{\bullet-1})$-invariant log cdh sheaf $\E$ on $\lSch/B$ with values in $\cV$ satisfies the logarithmic absolute purity if and only if for every codimension $1$ regular immersion of regular schemes $W\to Y$,
the induced morphism
\[
\E(Y,W)
\to
\E(Y-W)
\]
is an isomorphism.
\end{prop}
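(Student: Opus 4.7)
The ``only if'' direction $(A) \Rightarrow (B)$ is immediate: given a codimension $1$ regular immersion $W\to Y$ of regular schemes, the fs log scheme $(Y,W)$ has underlying regular scheme $Y$ and is log regular by Kato's criterion (since $W$ is a regular Cartier divisor in the regular $Y$); moreover $\partial(Y,W)=W$, so $(Y,W)-\partial(Y,W)=Y-W$. Applying logarithmic absolute purity to $(Y,W)$ yields the desired isomorphism.

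For the converse $(B) \Rightarrow (A)$, let $X$ be regular log regular. Since $\E$ is in particular a Zariski sheaf, the question is local on $\ul X$, and by the structure theorem for log regular fs log schemes I may assume $X$ admits a chart $\N^n \to M_X$ whose composition with $M_X\to\cO_X$ sends the standard generators to a regular sequence $a_1,\ldots,a_n$; then $\partial X = D_1 \cup \cdots \cup D_n$ is a simple normal crossings divisor with $D_i = \{a_i = 0\}$. I induct on $n$: the cases $n=0$ (trivial) and $n=1$ (hypothesis (B) applied to $D_1\to\ul X$) are immediate.

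For the inductive step $n \geq 2$, set $D' = D_1 + \cdots + D_{n-1}$, $X' = (\ul X, D')$, $U = \ul X - D'$, and $D_n^\circ = D_n \cap U$. The identity-on-underlying morphism $X \to X'$ is proper and an isomorphism away from $D_n$, so it fits into a proper log cdh distinguished square
\[
\begin{tikzcd}
F \ar[d] \ar[r] & X \ar[d] \\
Z \ar[r] & X'
\end{tikzcd}
\]
with $Z = (D_n, D'|_{D_n})$ and fiber product $F = (D_n, D'|_{D_n} + 0_\N)$, where a ``hollow'' $\N$-factor appears because the generator of the log structure cutting out $D_n$ restricts to $0$ on $D_n$. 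An entirely parallel log cdh square for the ``add $D_n^\circ$'' morphism $(U, D_n|_U)\to U$, combined with hypothesis (B) applied to $D_n^\circ \hookrightarrow U$, identifies $\E(U, D_n|_U)$ with $\E(\ul X - \partial X)$. Applying the inductive hypothesis to both $X'$ and $Z$ (each regular log regular with $n-1$ components) and combining the two cartesian squares produced by log cdh descent reduces the problem to showing that the restriction $\E(F)\to\E(F|_{D_n^\circ})=\E(D_n^\circ,0_\N)$ is an isomorphism.

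The main obstacle is this hollow case, in which $F \cong Z\times\pt_\N$ and $F|_{D_n^\circ}\cong D_n^\circ\times\pt_\N$. My plan is to pull back the log cdh distinguished square of Example \ref{cdh.3} along $Z\to\pt$ and along $D_n^\circ\to\pt$, decomposing $\E$ on $Z\times\pt_\N$ and $D_n^\circ\times\pt_\N$ in terms of $\E$ on the factors involving $\A^1$, $\A_\N$, and the underlying scheme, and then use the inductive hypothesis for $Z\times\A^1$ and $D_n^\circ\times\A^1$ (still regular log regular with $n-1$ components) to propagate the iso through the $\pt_\N$-factor. The residual difficulty is controlling $\E$ on the $\A_\N$-direction, which I expect to handle either by a simultaneous induction tracking both the SNC rank and an auxiliary ``hollow rank'', or by using the dividing Nisnevich invariance (implied by log cdh descent together with $(\P^\bullet,\P^{\bullet-1})$-invariance via Theorem \ref{cdh.7}) on a suitable log blowup to reduce to a configuration already covered by the inductive hypothesis.
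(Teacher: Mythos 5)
Your ``only if'' direction is fine, and the overall shape of your ``if'' direction is a genuinely different reduction from the paper's: you peel off one divisor at a time via the proper log cdh square for the morphism $(\ul{X},D_1+\cdots+D_n)\to(\ul{X},D_1+\cdots+D_{n-1})$, whereas the paper blows up the deepest stratum $D_1\cap\cdots\cap D_n$ and expresses $\E(\Bl_{\ul{Z}}\ul{X},E)$ as a limit over the cube of sub-log-structures, reducing everything to the comparisons $\E(X_I)\to\E(X_I')$. Both routes funnel into the same essential difficulty, namely the appearance of \emph{hollow} log structures on the closed stratum of the log cdh square, and that is exactly where your argument stops being a proof: the claim that $\E(F)\to\E(F|_{D_n^\circ})$, i.e.\ $\E(Z\times\pt_\N)\to\E((Z-\partial Z)\times\pt_\N)$, is an isomorphism is accompanied only by a ``plan'' and an acknowledged ``residual difficulty''. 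Since the reduction up to that point is formal, this hollow case carries the entire content of the inductive step, so as written the proof is incomplete.

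Moreover, the first strategy you sketch does not close up as stated. Pulling back the square of Example \ref{cdh.3} along $Z\to\pt$ exhibits $\E(Z\times\pt_\N)$ as the pushout of $\E(Z)\leftarrow\E(Z\times\A^1)\to\E(Z\times\A_\N)$; the first two terms are controlled by the inductive hypothesis, but $Z\times\A_\N$ is regular log regular with $n$ boundary components (one \emph{more} than $Z$), and the isomorphism you need, $\E(Z\times\A_\N)\simeq\E((Z-\partial Z)\times\A_\N)$, is not an instance of logarithmic absolute purity --- since $(Z\times\A_\N)-\partial(Z\times\A_\N)=(Z-\partial Z)\times\G_m$ --- nor of your inductive hypothesis. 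Iterating the same manoeuvre only trades a $\pt_\N$-factor for an $\A_\N$-factor, so the auxiliary ``hollow rank'' never decreases and the proposed double induction has no visible termination. Some genuinely new input is required at this point; the paper supplies it by observing that the (partially hollow) log structure on the closed stratum is pulled back from the model case $\A_{\N^n}$, where all the log schemes involved are log smooth over $\Spec(\Z)$, and then invoking \cite[Proposition 2.4.3]{logA1} together with $(\P^\bullet,\P^{\bullet-1})$-invariance. You would need to import an analogous statement, or rearrange your reduction so that the hollow stratum is visibly a base change of a fixed log smooth model, before your argument constitutes a proof.
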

\begin{proof}
The only if direction is trivial.
To show the if direction,
for a regular log regular $X\in \lSch/B$,
we need to show that the induced morphism $\E(X)\to \E(X-\partial X)$ is an isomorphism.
Working Zariski locally on $X$,
we may assume that $X$ has a chart $\N^n$ for some integer $n\geq 0$ by \cite[Theorem III.1.11.8(2)]{Ogu}.
We proceed by induction on $n$.
The claim is trivial if $n=0,1$.
Hence assume $n\geq 2$.

Let $e_1,\ldots,e_n$ be the standard coordinates in $\N^n$,
and let $D_1,\ldots,D_n$ be the corresponding effective Cartier divisors on $\ul{X}$.
Then we have $X\simeq (\ul{X},D_1+\cdots+D_n)$.
We set
\[
Z:=
(D_1\cap \cdots \cap D_n)\times_{\ul{X}}X,
\text{ }
X':=(\Bl_{\ul{Z}}\ul{X},E+\widetilde{D}_1+\cdots+\widetilde{D}_n),
\]
where $\widetilde{D}_i$ is the strict transform of $D_i$ for $1\leq i\leq n$,
and $E$ is the exceptional divisor.
For every nonempty subset $I:=\{i_1,\ldots,i_r\}$ of $\{1,\ldots,n\}$,
consider
\[
X_I:=(\ul{X},D_{i_1}+\cdots+D_{i_r}),
\text{ }
X_I':=(\ul{Y},E+\widetilde{D}_{i_1}+\cdots+\widetilde{D}_{i_r}).
\]
As in the proof of \cite[Proposition 3.2.5]{BPO2},
we have
\[
\lim_I \E(X_I')
\simeq
\E(\Bl_{\ul{Z}} \ul{X},E)
\]
using the fact that $\E$ is a Zariski sheaf,
where the limit runs over the category associated with the partially ordered set of nonempty subsets of $\{1,\ldots,n\}$.
We will show below that the induced morphism $\E(X_I)\to \E(X_I')$ is an isomorphism for every $I$.
Assuming this,
we have
\[
\lim_I \E(X_I)
\simeq
\E(\Bl_{\ul{Z}} \ul{X},E).
\]
By induction,
we have $\E(X_I)\simeq \E(X_I-\partial X_I)$ if $I\neq \{1,\ldots,n\}$ and $\E(\Bl_{\ul{Z}} \ul{X},E)\simeq \E(\ul{X}-\ul{Z})$.
Since $\E$ is a Zariski sheaf,
we have
\[
\lim_I \E(X_I-\partial X_I)
\simeq
\E(\ul{X}-\ul{Z}).
\]
Combine the above equations to have $\E(X_I)\simeq \E(X_I-\partial X_I)$ if $I=\{1,\ldots,n\}$,
i.e., $\E(X)\simeq \E(X-\partial X)$.

It remains to show that $\E(X_I)\to \E(X_I')$ is an isomorphism for every $I$.
Since $\E$ is a log cdh sheaf,
the induced square
\[
\begin{tikzcd}
\E(X_I)\ar[d]\ar[r]&
\E(Z_I)\ar[d]
\\
\E(X_I')\ar[r]&
\E(Z_I')
\end{tikzcd}
\]
is cartesian,
where $Z_I:=X_I\times_{\ul{X}}\ul{Z}$ and $Z_I':=Z_I\times_{X_I} X_I'$.
Hence the claim that $\E(X)\to \E(X')$ is an isomorphism is equivalent to the claim that $\E(Z_I)\to \E(Z_I')$ is an isomorphism.

Consider $X_0:=\A_{\N^n}$,
and produce the fs log schemes $X_{0,I}$, $X_{0,I}'$, $Z_{0,I}$, and $Z_{0,I}'$ as $X$ has produced $X_I'$, $Z_I$, and $Z_I'$.
Then we have an isomorphism $Z_I'\simeq \ul{Z_I'}\times_{\ul{Z_{0,I}'}}Z_{0,I}'$.
Hence the log structure on $Z_I'$ is uniquely determined by $Z$, $I$, and $n$,
so we may assume that $X=X_0$.
This smooth case is done by \cite[Proposition 2.4.3]{logA1} since $\E$ is $\A^1$-invariant.
\end{proof}

For a regular immersion $Z\to X$ in $\lSch/B$,
consider $\rD_Z^{\A^1}X:=\rD_Z X - \partial \rD_Z X$,
which is the usual deformation space in $\A^1$-homotopy theory.

\begin{thm}
\label{absolute.4}
Let $\E$ a $(\P^\bullet,\P^{\bullet-1})$-invariant and $\A^1$-invariant log cdh sheaf on $\lSch/B$ with values in $\cV$.
Then the following are equivalent:
\begin{enumerate}
\item[\textup{(1)}] $\E$ satisfies the logarithmic absolute purity.
\item[\textup{(2)}] $\E$ satisfies the absolute purity in the sense of \cite[Definition 1.3.2]{MR3930052}, i.e., the squares in the induced diagram
\begin{equation}
\label{absolute.4.1}
\begin{tikzcd}
\E(X)\ar[d]\ar[r,leftarrow]\ar[d]&
\E(\rD_Z^{\A^1}X)\ar[r]\ar[d]&
\E(\rN_Z X)\ar[d]
\\
\E(X-Z)\ar[r,leftarrow]&
\E(\rD_Z^{\A^1} X-Z\times \A^1)\ar[r]&
\E(\rN_Z X-Z)
\end{tikzcd}
\end{equation}
are cartesian for every regular immersion $Z\to X$ in $\Sch/B$.
\item[\textup{(3)}]
The squares in \eqref{absolute.4.1} are cartesian for every codimension $1$ regular immersion $Z\to X$ in $\Sch/B$.
\end{enumerate}
\end{thm}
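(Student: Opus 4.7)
The plan is to prove the cycle $(2) \Rightarrow (3) \Rightarrow (1) \Rightarrow (2)$. The implication $(2) \Rightarrow (3)$ is immediate, since a codimension one regular immersion is in particular a regular immersion.

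For $(1) \Rightarrow (2)$, I fix a regular immersion of regular schemes $Z \to X$ over $B$. By Theorem \ref{cdh.5} the squares in \eqref{intro.1.1} are cartesian. The point is that each of the four ``compactified'' vertices
\[
(\Bl_Z X,E),\quad (\Bl_Z(\rN_Z X),E^N),\quad \rD_Z X,\quad (\Bl_{Z\times\square}(\rD_Z X),E^D)
\]
is regular log regular: its underlying scheme is a blow-up of a regular scheme along a regularly embedded center, hence regular, and the log structure is furnished by a simple normal crossings divisor built from the exceptional divisors and the divisor at infinity of $\square$. Removing the boundary from each of these vertices yields precisely the four schemes $X - Z$, $\rN_Z X - Z$, $\rD_Z^{\A^1} X$, and $\rD_Z^{\A^1} X - Z \times \A^1$ appearing in \eqref{absolute.4.1}. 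Hypothesis (1) therefore produces natural equivalences between corresponding entries of \eqref{intro.1.1} and \eqref{absolute.4.1} compatible with all structure maps, and cartesianness of \eqref{absolute.4.1} follows.

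For $(3) \Rightarrow (1)$, by Proposition \ref{absolute.3} it is enough to show that $\E(Y, W) \to \E(Y - W)$ is an isomorphism for every codimension one regular immersion $W \to Y$ of regular schemes. In the codimension one case $\Bl_W Y = Y$ with exceptional divisor $W$, so $(\Bl_W Y, E) = (Y, W)$, and Theorem \ref{cdh.5} furnishes the logarithmic Gysin fiber sequence
\[
\E(\Th(\rN_W Y)) \to \E(Y) \to \E(Y, W),
\]
while hypothesis (3) furnishes the classical Gysin fiber sequence
\[
\E(\Th^{\mathrm{cl}}(\rN_W Y)) \to \E(Y) \to \E(Y - W).
\]
The strict open immersion $Y - W \hookrightarrow (Y, W)$ together with the forgetful map $(Y, W) \to Y$ assemble into a morphism of fiber sequences that is the identity on $\E(Y)$. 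Both Thom terms are naturally identified with the common expression $\cofib(\E(\P(\rN_W Y \oplus \cO)) \to \E(\P(\rN_W Y)))$: for the logarithmic Thom term this is exactly the description recorded in Definition \ref{intro.3}, and for the classical one it is the standard consequence of $\A^1$-invariance together with the projective bundle formula. The induced map between the Thom terms is the identity under these identifications, which forces $\E(Y, W) \to \E(Y - W)$ to be an isomorphism.

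The hardest step will be verifying that each blow-up and deformation appearing in \eqref{intro.1.1} is genuinely regular log regular and that removing the boundary recovers the scheme of \eqref{absolute.4.1}; this amounts to a local calculation with the log structures in the spirit of Example \ref{cdh.4}. A secondary subtlety is matching the two $\P^\bullet$-descriptions of the Thom term in the $(3) \Rightarrow (1)$ step, but once the comparison morphism of fiber sequences is set up this reduces to tracking the $\P^\bullet$-identifications on both sides.
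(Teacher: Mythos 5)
Your proposal is correct and rests on the same three ingredients as the paper's own proof: Theorem \ref{cdh.5} (cartesianness of \eqref{intro.1.1} for all regular immersions), Proposition \ref{absolute.3} (reduction of logarithmic absolute purity to the codimension one statement that $\E(Y,W)\to\E(Y-W)$ is an equivalence), and the smooth case of purity for the zero section of a vector bundle. The differences are organizational. For the converse direction the paper stacks the bottom row of \eqref{intro.1.1} over the bottom row of \eqref{absolute.4.1} along the boundary-removal maps, notes that the resulting squares are cartesian, and transports the equivalence $\E(\Bl_Z(\rN_Z X),E^N)\to\E(\rN_Z X-Z)$ (the smooth case, \cite[Proposition 2.4.3]{logA1} plus Theorem \ref{cdh.7}) from the right column to the left column; this proves $(2)\Rightarrow(1)$ and $(3)\Rightarrow(1)$ uniformly and in arbitrary codimension. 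You instead prove only $(3)\Rightarrow(1)$, exploit that $\Bl_W Y=Y$ in codimension one, and compare the two Gysin fiber sequences over the identity of $\E(Y)$; this is a pleasant simplification, though it buys nothing essential since Proposition \ref{absolute.3} already reduces everything to codimension one.

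The one step you should tighten is the assertion that ``the induced map between the Thom terms is the identity'' under the two $\P^\bullet$-identifications. As written this requires checking that the Morel--Voevodsky identification of $\fib(\E(\rN_W Y)\to\E(\rN_W Y-W))$ with $\fib(\E(\P(\rN_W Y\oplus\cO))\to\E(\P(\rN_W Y)))$ and the logarithmic identification of \cite[Proposition 7.4.5]{BPO} are compatible with your comparison morphism of fiber sequences; that naturality is not automatic and is not supplied by merely knowing that both sides admit such a description. It is cleaner to avoid the $\P^\bullet$-descriptions altogether: the map between the Thom terms sits over $\id_{\E(\rN_W Y)}$ and is induced by $\E(\rN_W Y,0_W)\to\E(\rN_W Y-0_W)$, so it is an equivalence if and only if the latter is, and the latter is precisely the vector bundle case of logarithmic purity, i.e.\ \cite[Proposition 2.4.3]{logA1} together with Theorem \ref{cdh.7}. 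This is exactly the input the paper uses. With that substitution your argument is complete.
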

\begin{proof}
Let $Z\to X$ be a regular immersion (resp.\ codimension $1$ regular immersion) in $\Sch/B$.
We know that the squares in \eqref{intro.1.1} are cartesian by Theorem \ref{cdh.5}.
Hence (1) implies (2) (resp.\ (3)).

Conversely,
assume (2) (resp.\ (3)).
Then the squares in the induced commutative diagram
\[
\begin{tikzcd}
\E(\Bl_Z X,E)\ar[d]\ar[r,leftarrow]\ar[d]&
\E(\Bl_{Z\times \square}(\rD_Z X),E^D)\ar[r]\ar[d]&
\E(\Bl_Z(\rN_Z X),E^N)\ar[d]
\\
\E(X-Z)\ar[r,leftarrow]&
\E(\rD_Z^{\A^1} X-Z\times \A^1)\ar[r]&
\E(\rN_Z X-Z)
\end{tikzcd}
\]
are cartesian.
Since the $0$-section $Z\to \rN_Z X$ is a closed immersion of smooth schemes over $Z$,
the morphism $\E(\Bl_Z(\rN_Z X),E^N)\to \E(\rN_Z X-Z)$ is an isomorphism by \cite[Proposition 2.4.3]{logA1} and Theorem \ref{cdh.7}.
It follows that the morphism $\E(\Bl_Z X,E)\to \E(X-Z)$ is an isomorphism too.
Together with Proposition \ref{absolute.3},
we deduce (1) from (2) (resp.\ (3)).
\end{proof}

\begin{df}
An object $\bE\in \SH(B)$ satisfies the \emph{absolute purity} if the presheaf on $\Sch/B$ given by
\[
X\mapsto \hom_{\SH(X)}(\Sigma^{0,n}\Sigma^\infty X_+,p^*\bE)
\]
satisfies the absolute purity for every integer $n$,
where $p\colon X\to B$ is the structure morphism.
\end{df}

\begin{exm}
\label{absolute.5}
Fujiwara and Kato \cite[Theorem 7.4]{MR1922832} showed that
$R\Gamma_{\ket}(-,\Z/n)$ satisfies the logarithmic absolute purity if $n$ is invertible in $B$.
This is a consequence of Gabber's absolute purity theorem \cite{MR1971516}.

According to \cite[Example 4.3.13]{MR4321205},
the algebraic $K$-theory spectrum $\KGL$,
the hermitian $K$-theory spectrum $\KQ$ over $\Spec(\Z[1/2])$,
the rational Eilenberg-MacLane spectrum $\bM \Q$,
and the rational algebraic cobordism spectrum $\MGL \otimes \Q$ satisfy the absolute purity.
\end{exm}

\begin{thm}
\label{absolute.6}
Let $X$ be a regular log regular log scheme over $B$,
let $i\colon \partial X\to X$ and $j\colon X-\partial X\to X$ be the obvious strict immersions,
and let $\bE$ be an object of $\SH(B)$.
If $\bE$ satisfies the absolute purity,
then $\bE$ satisfies the logarithmic absolute purity in the sense that 
there is a natural isomorphism in $\SH^{\ex}(X)$
\[
\bE\simeq j_*\bE,
\]
where we set $\bE:=p^*\bE$ for every morphism of fs log schemes $p$ whose target is $B$.
Hence there is a natural isomorphism in $\SH^{\ex}(\partial X)$
\[
\bE\simeq \Psi^{\log}\bE,
\]
where $\Psi^{\log}:=i^*j_*$ is the log nearby cycles functor.
\end{thm}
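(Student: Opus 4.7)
The strategy is to transport the hypothesized absolute purity of $\bE$ into the logarithmic absolute purity of the cohomology presheaf attached to $\bE$ via Theorem~\ref{absolute.4}, and then upgrade that presheaf-level statement to the claimed equivalence in $\SH^{\ex}(X)$.

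First I would form the presheaf of spectra $\E$ on $\lSch/B$ given by $\E(Y) := \hom_{\SH(Y)}(\Sigma^\infty Y_+, q^*\bE)$ for $q\colon Y\to B$ the structure morphism, together with the analogous presheaves for all suspensions and Tate twists of $\bE$. By Theorem~\ref{cdh.6} each such $\E$ is a $(\P^\bullet,\P^{\bullet-1})$-invariant log cdh sheaf, and since $\bE\in\SH(B)$ it is also $\A^1$-invariant. The hypothesis that $\bE$ satisfies absolute purity is exactly condition~(2) of Theorem~\ref{absolute.4} for these presheaves, so Theorem~\ref{absolute.4} yields logarithmic absolute purity: for every regular log regular $Y\in\lSch/B$ the comparison $\E(Y)\to \E(Y-\partial Y)$ is an equivalence.

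Next I would deduce $\bE\simeq j_*\bE$ in $\SH^{\ex}(X)$ by testing the unit morphism $\bE\to j_*j^*\bE=j_*\bE$ against a generating family for $\SH^{\ex}(X)$, namely (shifts and Tate twists of) $\Sigma^\infty Y_+$ for strict smooth $Y\to X$, matching the definition in \cite[Definition~2.3.1]{logshriek}. For such $Y$ we have $\partial Y=Y\times_X\partial X$ by strictness, hence $Y\times_X(X-\partial X)=Y-\partial Y$, and $Y$ is itself regular log regular because $X$ is and $Y\to X$ is strict smooth. The two mapping spectra then compute as $\E(Y)$ and $\E(Y-\partial Y)$, and the unit induces exactly the logarithmic absolute purity comparison, which is an equivalence by the previous paragraph. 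The second assertion follows by applying $i^*$ to $\bE\simeq j_*\bE$ and using the compatibility $i^*p_X^*\bE\simeq p_{\partial X}^*\bE$ to recognize $i^*\bE$ on the left.

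The main obstacle I anticipate is the last matching step: pinning down a generating family for $\SH^{\ex}(X)$ that is robust enough to detect equivalences while consisting of $Y\to X$ strict with $Y$ regular log regular, so that the identifications $\partial Y=Y\times_X\partial X$ and $Y\times_X(X-\partial X)=Y-\partial Y$ are genuinely available. Once that family is secured, the remainder is a direct translation through the $(j^*,j_*)$-adjunction and the compatibility of pullback with the structure map to $B$.
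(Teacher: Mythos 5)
Your proposal is correct and takes the same route as the paper, whose entire proof is the sentence that the theorem ``is an immediate consequence of Theorem \ref{absolute.4}''; you simply make that deduction explicit by invoking Theorem \ref{cdh.6} to verify the sheaf hypotheses, Theorem \ref{absolute.4} to obtain presheaf-level logarithmic absolute purity, and the $(j^*,j_*)$-adjunction against generators of $\SH^{\ex}(X)$ to upgrade to the statement $\bE\simeq j_*\bE$. The one point you flag --- that the generators of $\SH^{\ex}(X)$ from \cite[Definition 2.3.1]{logshriek} are indeed twists and shifts of $\Sigma^\infty Y_+$ for $Y\to X$ strict smooth, so that $\partial Y=Y\times_X\partial X$ and $Y$ inherits regular log regularity --- is a matter of checking the cited definition rather than a gap in the argument.
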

\begin{proof}
This is an immediate consequence of Theorem \ref{absolute.4}.
\end{proof}

\section{Hochschild homology}
\label{HH}

Let $R$ be a commutative ring throughout this section.

A log ring (often called pre-log ring) is a map of commutative monoids $M\to A$,
where $A$ is a commutative ring whose monoid structure is the multiplication.
A commutative ring $A$ can be considered as the log ring $(A,\{1\})$.

For every log $R$-algebra $(A,M)$,
we have Hochschild homology $\HH((A,M)/R)$,
see \cite[Definition 5.3]{BLPO}.
We can globalize this to obtain Hochschild homology $\HH(X/R)$ for log scheme $X$ over $R$,
see \cite[Definition 7.15]{BLPO} for the noetherian case and \cite[Definition 8.3.7, Proposition 8.3.8]{BPO2} to avoid the noetherian assumption.
We refer to \cite[\S 2.11]{BLPO2} for a natural $S^1$-action on $\HH((A,M)/R)$.
From this,
we also have a natural $S^1$-action on $\HH(X/R)$ too.

For a log ring $(R,P)$,
consider the category $\Poly_{(R,P)}$ consisting of the log rings $(R,P)\otimes (\Z[\N^I \oplus \N^J],\N^J)$ for finite sets $I$ and $J$.
Let $\cD$ be an $\infty$-category admitting sifted colimits.
We have an equivalence of $\infty$-categories
\begin{equation}
\Fun_\Sigma(\mathrm{AniLog}_{(R,P)},\cD)
\simeq
\Fun(\Poly_{(R,P)},\cD)
\end{equation}
arguing as in \cite[Construction 2.1]{BMS19},
where $\Fun_\Sigma$ denotes the $\infty$-category of functors preserving sifted colimits,
and $\mathrm{AniLog}_{(R,P)}$ denotes the $\infty$-category of animated log (=pre-log) $(R,P)$-algebras in the sense of \cite[\S 2.2]{BLPO}.
If $F\colon \mathrm{AniLog}_{(R,P)} \to \cD$ corresponds to $f\colon \Poly_{(R,P)}\to \cD$,
we say that $F$ is a \emph{left Kan extension of $f$}.

We have the HKR filtration \cite[Theorem 5.15]{BLPO} on $\HH(X/R)$,
which we review as follows.
For $(A,M)\in \Poly_{(R,\{1\})}$,
the HKR filtration on $\HH((A,M)/R)$ is the Postnikov filtration.
For general log $R$-algebra $(A,M)$,
we get the HKR filtration on $\HH((A,M)/R)$ by left Kan extension.
For general log scheme $X$,
we get the HKR filtration on $\HH(X/R)$ by Zariski descent.

\begin{prop}
\label{HH.1}
Let $(A,M)\to (B,N),(C,L)$ be maps of log $R$-algebras,
where $R$ is a commutative ring.
Then the induced natural $S^1$-equivariant morphism of filtered $\E_\infty$-rings
\[
\HH((B,N)/R)\otimes_{\HH((A,M)/R)}\HH((C,L)/R)
\to
\HH((B,N)\otimes_{(A,M)}^\L (C,L)/R)
\]
is an isomorphism.
\end{prop}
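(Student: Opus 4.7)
My plan is to reduce the statement to the case of polynomial log algebras by sifted colimit preservation, and then verify the equivalence in that case.

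\textbf{Step 1: Reduction to polynomial log algebras.} With $R$ and $(A,M)$ fixed, both sides of the claimed equivalence define functors
\[
\mathrm{AniLog}_{(A,M)} \times \mathrm{AniLog}_{(A,M)} \longrightarrow \cD,
\]
where $\cD$ denotes the $\infty$-category of filtered $S^1$-equivariant $\E_\infty$-$R$-algebras. I would first check that each side preserves sifted colimits in each variable separately. For the right-hand side this follows because $\HH(-/R)$ is, by construction, left Kan extended from $\Poly_{(R,\{1\})}$ and hence preserves sifted colimits, while $-\otimes^{\L}_{(A,M)}-$ preserves sifted colimits in each input. For the left-hand side, the same property of $\HH(-/R)$ combined with the fact that relative tensor products of filtered $\E_\infty$-rings commute with sifted colimits in each variable gives the conclusion. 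Via the equivalence $\Fun_\Sigma(\mathrm{AniLog}_{(A,M)}, \cD) \simeq \Fun(\Poly_{(A,M)}, \cD)$ applied separately in each argument, it therefore suffices to check the isomorphism when $(B,N)$ and $(C,L)$ both lie in $\Poly_{(A,M)}$.

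\textbf{Step 2: Polynomial case.} When $(B,N),(C,L) \in \Poly_{(A,M)}$, the derived tensor product $(B,N)\otimes^{\L}_{(A,M)}(C,L)$ agrees with the ordinary tensor product and is again a polynomial log $(A,M)$-algebra of the form $(A,M)\otimes(\Z[\N^{I}\oplus\N^{J}],\N^{J})$. The HKR filtration on $\HH$ of any polynomial log algebra is the Postnikov filtration, so one can compare the two sides filtration degree by filtration degree. On graded pieces the comparison reduces to a K\"unneth-type statement involving $\pi_0$ and the module of log K\"ahler differentials, which is explicit in the polynomial case (cf.\ \cite[Theorem 5.15]{BLPO}). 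Compatibility of the $S^1$-action, the multiplicative $\E_\infty$-structure, and the HKR filtration then propagates by naturality.

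\textbf{Main obstacle.} The delicate point is tracking the HKR filtration, the $\E_\infty$-structure, and the $S^1$-action simultaneously through the base change in Step 2. A cleaner conceptual route would be to promote $\HH(-/R)$ to a symmetric monoidal functor from $\mathrm{AniLog}_R$ into filtered $S^1$-equivariant $\E_\infty$-$R$-algebras; since relative tensor products compute pushouts on both sides, the proposition would then follow formally from the fact that symmetric monoidal functors preserve pushouts of commutative algebras, bypassing the explicit polynomial computation altogether.
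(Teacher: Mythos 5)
Your Step 1 (reduction to $(B,N),(C,L)\in\Poly_{(A,M)}$ by sifted-colimit preservation) is fine, but Step 2 contains a genuine gap. You assert that ``the HKR filtration on $\HH$ of any polynomial log algebra is the Postnikov filtration,'' and use this to compare the two sides degreewise. That assertion is only true for objects of $\Poly_{(R,\{1\})}$: the HKR filtration is \emph{defined} as the Postnikov filtration on $\Poly_{(R,\{1\})}$ and then left Kan extended to all log $R$-algebras. After your reduction, $(B,N)$ and $(C,L)$ have the form $(A,M)\otimes(\Z[\N^{I}\oplus\N^{J}],\N^{J})$ for an \emph{arbitrary} log $R$-algebra $(A,M)$; these are polynomial over $(A,M)$ but not over $(R,\{1\})$, so there is no reason for their HKR filtrations to be Postnikov, and your degreewise comparison (and the appeal to the explicit HKR theorem) does not apply. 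Identifying the filtration on $\HH((A,M)\otimes(\Z[\N^{I}\oplus\N^{J}],\N^{J})/R)$ is itself a nontrivial instance of the proposition, so as written the argument is circular at exactly the point where content is needed.

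The paper's proof fills this hole with a two-stage argument that you are missing. First it proves the special case $(A,M)=R$: there one may left Kan extend in \emph{both} variables down to $\Poly_{(R,\{1\})}$, where all three filtrations genuinely are Postnikov, and the claim reduces to the fact that the tensor product of two $\E_\infty$-rings with Postnikov filtrations again carries the Postnikov filtration. Then, for general $(A,M)$, it left Kan extends in only one variable, writes $(C,L)=(A,M)\otimes_R(R[\N^{I}\oplus\N^{J}],\N^{J})$, applies the $(A,M)=R$ case to identify $\HH((C,L)/R)\simeq\HH((A,M)/R)\otimes_R\HH((R[\N^{I}\oplus\N^{J}],\N^{J})/R)$ as filtered rings, and cancels the factor $\HH((A,M)/R)$ in the relative tensor product. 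Your ``cleaner conceptual route'' (symmetric monoidality of filtered $\HH(-/R)$ plus preservation of pushouts of commutative algebras) is essentially correct in spirit, but the symmetric monoidality of the \emph{filtered} functor is precisely the $(A,M)=R$ case above, so it does not bypass the computation --- it is the computation. To repair your proof, insert the base-$R$ case as a separate first step and then run the cancellation argument.
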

\begin{proof}
Note that the claim is obvious if we are not concerned about the filtrations.

We first treat the special case when $(A,M)=R$.
By left Kan extension,
we may assume $(B,N),(C,L)\in \Poly_{(R,\{1\})}$.
In this case,
the filtrations on $\HH((B,N)/R)$, $\HH((C,L)/R)$, and $\HH((B,N)\otimes_R^\L (C,L)/R)$ are the Postnikov filtrations.
Hence the claim follows from the fact that the tensor product of two $\E_\infty$-rings with the Postnikov filtrations is an $\E_\infty$-ring with the Postnikov filtration.

For general $(A,M)$,
by left Kan extension,
we may assume $(C,L)\in \Poly_{(A,M)}$, i.e.,
$(C,L)=(A,M)\otimes_R (R[\N^I \oplus \N^J],\N^J)$ for some finite sets $I$ and $J$.
Due to the above special case,
we have the natural isomorphisms
\begin{align*}
& \HH((B,N)/R)\otimes_{\HH((A,M)/R)}\HH((C,L)/R)
\\
\simeq &
\HH((B,N)/R)\otimes_{\HH((A,M)/R)}\HH((A,M)/R)\otimes_R \HH((R[\N^I \oplus \N^J],\N^J)/R)
\\
\simeq &
\HH((B,N)/R)\otimes_R \HH((R[\N^I \oplus \N^J],\N^J)/R)
\\
\simeq &
\HH((B,N)\otimes_R (R[\N^I \oplus \N^J],\N^J)/R)
\\
\simeq &
\HH((B,N)\otimes_{(A,M)}^\L (C,L)/R).
\end{align*}
This shows the claim.
\end{proof}

\begin{const}
\label{HH.2}
Let $Z:=\Spec(A/(a_1,\ldots,a_n))\to X:=\Spec(A)$ be a regular immersion of affine schemes,
where $A$ is a commutative ring with a regular sequence $a_1,\ldots,a_n$.
Consider the commutative diagram $\Gys(Z\to X)$ in Remark \ref{cdh.9}.
Using \cite[Lemma 3.5]{MR3748313},
we see that $\Gys(Z\to X)$ is a derived pullback of $\Gys(\Spec(\Z)\xrightarrow{i} \A^n)$,
where $i$ is the inclusion to the origin.
Since derived log schemes are undefined yet but animated log rings are defined,
interpret this result Zariski locally.
\end{const}

\begin{thm}
\label{HH.3}
The presheaf of filtered complexes with $S^1$-action $\HH(-/R)$ on $\lSch/R$ admits logarithmic Gysin sequences.
\end{thm}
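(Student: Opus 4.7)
The plan is to use Construction \ref{HH.2} together with Proposition \ref{HH.1} to reduce the claim to the known smooth case. Since $\HH(-/R)$ is defined by Zariski descent from the affine case and the construction of the HKR filtration and $S^1$-action is compatible with this descent, the cartesianness of the squares in \eqref{intro.1.1} for the regular immersion $Z\to X$ is Zariski-local on $X$. Thus I may assume $X=\Spec(A)$ is affine and $Z$ is cut out by a regular sequence $a_1,\ldots,a_n$ in $A$.

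By Construction \ref{HH.2}, each term $Y$ in the Gysin diagram $\Gys(Z\to X)$ is Zariski-locally presented by an animated log ring that arises as the derived base change, along the classifying map $\Z[x_1,\ldots,x_n]\to A$ sending $x_i\mapsto a_i$, of the corresponding animated log ring $Y_0$ in the smooth model diagram $\Gys(\Spec(\Z)\xrightarrow{i}\A^n)$. Applying Proposition \ref{HH.1} to each such base change yields a natural $S^1$-equivariant isomorphism of filtered $\E_\infty$-rings
\[
\HH(Y/R)\simeq \HH(Y_0/R)\otimes^{\L}_{\HH(\Z[x_1,\ldots,x_n]/R)}\HH(A/R).
\]
Consequently, $\HH(\Gys(Z\to X)/R)$ is obtained from $\HH(\Gys(i)/R)$ by base change of filtered $S^1$-equivariant $\E_\infty$-rings along $\HH(\Z[x_1,\ldots,x_n]/R)\to \HH(A/R)$.

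Since $i\colon \Spec(\Z)\to\A^n$ is a closed immersion of smooth schemes over $\Spec(\Z)$, the squares in $\HH(\Gys(i)/R)$ are cartesian by the smooth logarithmic Gysin theorem for $\HH$ already established in \cite{BPO2} and \cite{BLPO}. The relative tensor product over a fixed filtered $S^1$-equivariant $\E_\infty$-ring is a left adjoint, hence preserves colimits and in particular pushouts; and in the stable setting pushout squares coincide with pullback squares. The squares in \eqref{intro.1.1} for $\HH(-/R)$ are therefore cartesian.

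The main obstacle is a foundational one: derived log schemes are not defined, so the ``derived pullback'' in Construction \ref{HH.2} must be interpreted entirely through animated log rings on affine charts. This is what forces the initial Zariski localization and requires matching, on each affine piece, the chart data of the various terms $(\Bl_Z X,E)$, $\rD_Z X$, and $(\Bl_{Z\times\square}(\rD_Z X),E^D)$ with those of the model diagram $\Gys(i)$; once this is done, the rest of the argument is the formal colimit-preservation property of base change.
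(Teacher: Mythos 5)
Your proposal is correct and follows essentially the same route as the paper: Zariski localization, then Construction \ref{HH.2} plus Proposition \ref{HH.1} to exhibit $\HH(\Gys(Z\to X)/R)$ as a base change of the smooth model case, and finally colimit-preservation of the relative tensor product together with stability to transport cartesianness from the smooth case (which the paper deduces from Theorem \ref{cdh.7} and \cite{BPO2}). The only cosmetic fix needed is that $\HH(\Z[x_1,\ldots,x_n]/R)$ does not typecheck for general $R$; one should first base change the model diagram along $\Z\to R$ and tensor over $\HH(R[x_1,\ldots,x_n]/R)$, i.e., reduce to the $0$-section $\Spec(R)\to\A^n_R$ as the paper does.
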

\begin{proof}
Since the functor forgetting $S^1$-action is conservative and exact,
it suffices to show that the presheaf of filtered complexes $\HH(-/R)$ admits logarithmic Gysin sequences.
Let $Z\to X$ be a regular immersion of schemes over $R$.
We need to show that the squares in \eqref{intro.1.1} are cartesian for $\HH(-/R)$.
This question is Zariski local on $X$.
Hence  we may assume that $X=\Spec(A)$ for some commutative ring $A$ and $Z=\Spec(A/(a_1,\ldots,a_n))$ for some regular sequence $a_1,\ldots,a_n$ in $A$.
By Proposition \ref{HH.1} and Construction \ref{HH.2},
we reduce to the case when $Z\to X$ is the $0$-section $\Spec(R)\to \A_R^n$.
By Theorem \ref{cdh.7},
it suffices to show that $\HH(-/R)$ is a $(\P^\bullet,\P^{\bullet-1})$-invariant strict Nisnevich sheaf.
This is a consequence of \cite[Proposition 8.3.11, Theorem 8.4.4]{BPO2}.
\end{proof}

\begin{cor}
\label{HH.4}
Let $j$ be an integer.
Then the presheaf of complexes
\[
X\mapsto R\Gamma_{Zar}(X,\L\Omega_{X/R}^j)
\]
on $\lSch/R$ admits logarithmic Gysin sequences.
\end{cor}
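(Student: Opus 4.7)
The plan is to deduce Corollary \ref{HH.4} directly from Theorem \ref{HH.3} by passing to the associated graded pieces of the HKR filtration. Since $\HH(-/R)$ is a presheaf of filtered complexes (with $S^1$-action), and Theorem \ref{HH.3} asserts that the squares in \eqref{intro.1.1} are cartesian in the stable $\infty$-category of filtered complexes, the first observation is that the functor $\gr^j\colon \Fil(\cV) \to \cV$ is exact (it is a colimit of a cofiber, hence preserves finite limits in the stable setting). Therefore, for every regular immersion of schemes $Z \to X$ over $R$, the squares in \eqref{intro.1.1} applied to the presheaf $\gr^j \HH(-/R)$ are again cartesian.

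Next, I would invoke the logarithmic HKR theorem \cite[Theorem 5.15]{BLPO}, which identifies the $j$-th graded piece of the HKR filtration as
\[
\gr^j \HH(X/R) \simeq R\Gamma_{\Zar}(X,\L\Omega_{X/R}^j)[j].
\]
This identification is natural in $X \in \lSch/R$, because both sides are constructed by left Kan extension from $\Poly_{(R,\{1\})}$ and Zariski descent: on polynomial log algebras, $\L\Omega^j$ reduces to honest log K\"ahler differentials and the identification is the classical HKR isomorphism. Since shifting by $[-j]$ is an equivalence, cartesian squares for $\gr^j \HH(-/R)$ yield cartesian squares for $R\Gamma_{\Zar}(-,\L\Omega_{-/R}^j)$.

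Strictly speaking, to make sense of \eqref{intro.1.1} for the target presheaf one needs the values $R\Gamma_{\Zar}(Y,\L\Omega_{Y/R}^j)$ on the various objects $Y$ appearing in the Gysin diagram $\Gys(Z\to X)$ (blow-ups, deformation spaces, and projective bundles). These are all fs log schemes over $R$, so the log cotangent complex and its exterior powers are already defined in the sources cited for Theorem \ref{HH.3}; no further construction is needed. Combining the two steps gives the claim.

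The main (mild) obstacle is justifying the naturality and compatibility of the HKR identification $\gr^j \HH(X/R) \simeq R\Gamma_{\Zar}(X,\L\Omega_{X/R}^j)[j]$ across all the objects in the Gysin diagram, including the non-smooth fs log schemes $(\Bl_Z X,E)$ and $\Bl_{Z\times\square}(\rD_Z X)$. This is handled by the left-Kan-extension-plus-Zariski-descent construction of both the filtration and of $\L\Omega^j$, which makes the identification canonical on all animated log $R$-algebras and hence on all fs log schemes over $R$ after Zariski sheafification. Once this is in place, the corollary is formal.
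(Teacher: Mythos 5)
Your proposal matches the paper's proof: both deduce the result by applying the exact functor $\gr^j$ to the HKR-filtered statement of Theorem \ref{HH.3} and identifying the graded piece with $R\Gamma_{\Zar}(-,\L\Omega^j_{-/R})$ (up to shift) via \cite[Theorem 5.15]{BLPO}. The extra remarks on naturality via left Kan extension and Zariski descent are consistent with how the filtration is set up in the paper, so the argument is correct and essentially identical.
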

\begin{proof}
The $j$th graded piece of $\HH(X/R)$ is $R\Gamma_{Zar}(X,\L\Omega_{X/R}^j)$ by \cite[Theorem 5.15]{BLPO}.
Since the functor $\gr^j$ taking the $j$th graded piece preserves colimits for every integer $j$,
Theorem \ref{HH.3} finishes the proof.
\end{proof}

Recall that $\HC^-(X/R):=\HH(X/R)^{hS^1}$ and $\HP(X/R):=\HH(X/R)^{tS^1}$ for log scheme $X$ over $R$,
where $(-)^{hS^1}$ (resp.\ $(-)^{tS^1}$) denotes the homotopy fixed point (resp.\ Tate construction).

\begin{cor}
\label{HH.5}
The presheaf of complexes $\HC^-(-/R)$ and $\HP(-/R)$ on the category of fs log schemes over $R$ admit logarithmic Gysin sequences.
\end{cor}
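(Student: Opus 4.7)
The plan is to derive this corollary mechanically from Theorem \ref{HH.3} by invoking the fact that both $(-)^{hS^1}$ and $(-)^{tS^1}$ are exact functors between stable $\infty$-categories and therefore preserve cartesian squares. By Theorem \ref{HH.3}, for every regular immersion of schemes $Z\to X$ over $R$, the squares in \eqref{intro.1.1} associated with the presheaf $\HH(-/R)$ are cartesian in the stable $\infty$-category of $S^1$-equivariant complexes (i.e., in $\Mod_{R}$ with an $S^1$-action, or equivalently in complexes equipped with the appropriate cyclotomic/$S^1$-action). Applying a functor that preserves fiber sequences termwise then transports the cartesianness from $\HH(-/R)$ to $\HC^-(-/R)$ and $\HP(-/R)$.

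First I would note that the homotopy fixed point functor $(-)^{hS^1}$ is a limit along $BS^1$, so it preserves all small limits and in particular preserves cartesian squares. Applying it levelwise to the diagram \eqref{intro.1.1} for $\HH(-/R)$ then yields that the analogous squares for $\HC^-(-/R)$ are cartesian, so $\HC^-(-/R)$ admits logarithmic Gysin sequences. Next I would observe that the Tate construction $(-)^{tS^1}$ sits in a natural cofiber sequence $(-)_{hS^1}\to (-)^{hS^1}\to (-)^{tS^1}$; since the first two functors preserve fiber sequences (the homotopy orbits $(-)_{hS^1}$ is a colimit, hence exact in the stable setting, and $(-)^{hS^1}$ as above), the cofiber $(-)^{tS^1}$ is an exact functor between stable $\infty$-categories as well. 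Consequently it sends cartesian squares to cartesian squares, and the same argument gives logarithmic Gysin sequences for $\HP(-/R)$.

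There is no real obstacle here, only the bookkeeping point that the cartesian squares produced by Theorem \ref{HH.3} live in the stable $\infty$-category of $S^1$-objects (this is guaranteed by the proof of Theorem \ref{HH.3}, which establishes the conclusion for filtered complexes with $S^1$-action before one forgets to underlying filtered complexes). Once this is noted, applying the exact endofunctors $(-)^{hS^1}$ and $(-)^{tS^1}$ to the squares in \eqref{intro.1.1} finishes the proof.
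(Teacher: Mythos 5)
Your proposal is correct and is essentially the paper's own argument: the paper's proof also just applies $(-)^{hS^1}$ and $(-)^{tS^1}$ to the cartesian squares for $\HH(\Gys(Z\to X))$ supplied by Theorem \ref{HH.3}, using that Theorem \ref{HH.3} is stated for $\HH(-/R)$ with its $S^1$-action so that these exact functors may be applied. Your added justification that $(-)^{tS^1}$ is exact as the cofiber of the norm map between two exact functors is a correct elaboration of the same step.
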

\begin{proof}
Let $Z\to X$ be a regular immersion of schemes.
Apply $(-)^{hS^1}$ and $(-)^{tS^1}$ to $\HH(\Gys(Z\to X))$ to conclude.
\end{proof}

\begin{thm}
\label{THH.5}
The presheaf of cyclotomic spectra $\THH$ on $\lSch$ admits logarithmic Gysin sequences.
\end{thm}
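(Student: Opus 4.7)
The plan is to mimic the proof of Theorem \ref{HH.3} one level up, replacing filtered complexes by cyclotomic spectra. First, since the forgetful functor from cyclotomic spectra to spectra (through $S^1$-equivariant spectra) is conservative and exact, it is enough to show that the squares in \eqref{intro.1.1} are cartesian for the underlying presheaf of spectra $\THH(-)$. The question is Zariski local on $X$, so we may assume that $X=\Spec(A)$ and $Z=\Spec(A/(a_1,\ldots,a_n))$ for a regular sequence $a_1,\ldots,a_n$ in $A$.

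Next, I would establish the $\THH$-analog of Proposition \ref{HH.1}: for maps of animated log rings $(A,M)\to (B,N),(C,L)$, the natural map of cyclotomic spectra
\[
\THH((B,N))\otimes_{\THH((A,M))}\THH((C,L))
\to
\THH((B,N)\otimes_{(A,M)}^\L (C,L))
\]
is an equivalence. The proof runs exactly as in Proposition \ref{HH.1}, using $\THH=\HH(-/\Sph)$ and left Kan extension from $\Poly_{(\Sph,\{1\})}$; the key input is that the tensor product of two connective $\E_\infty$-algebras in cyclotomic spectra with their Postnikov filtrations again has the Postnikov filtration, so the Postnikov-truncation definition behaves well.

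With this in hand, Construction \ref{HH.2} lets me realize $\Gys(Z\to X)$ as a derived base change of $\Gys(\Spec(\Z)\xrightarrow{i} \A^n)$, so the base change formula reduces the question to the single case where $Z\to X$ is the zero section $\Spec(\Z)\to \A^n_\Z$. This is the smooth case, and I would invoke Theorem \ref{cdh.7}, which requires checking that $\THH$ is a $(\P^\bullet,\P^{\bullet-1})$-invariant strict Nisnevich sheaf of cyclotomic spectra on $\lSch$; this is established in the references \cite{BPO2} and \cite{BLPO}, combining strict Nisnevich descent with $(\P^\bullet,\P^{\bullet-1})$-invariance for $\THH$ of log smooth schemes.

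The main technical obstacle is the base change equivalence as cyclotomic spectra: one has to verify that both the $S^1$-action and the cyclotomic Frobenius are preserved under the identifications made by left Kan extension from $\Poly_{(R,P)}$. The $S^1$-action case was already addressed in Theorem \ref{HH.3}, and upgrading to cyclotomic spectra reduces to observing that the cyclotomic structure on $\THH$ is natural under base change of (animated) log rings and compatible with sifted colimits, so that the left Kan extension argument of Proposition \ref{HH.1} applies verbatim in the $\infty$-category of cyclotomic spectra.
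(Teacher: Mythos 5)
Your proposal is correct and follows essentially the same route as the paper: reduce to the underlying presheaf of spectra via the conservative exact forgetful functor from cyclotomic spectra, then repeat the argument of Theorem \ref{HH.3} (Zariski localization, the base-change formula in the style of Proposition \ref{HH.1} for $\THH=\HH(-/\Sph)$, Construction \ref{HH.2} to reduce to the zero section $\Spec(\Z)\to\A^n$, and Theorem \ref{cdh.7} for the smooth case). Your closing worry about tracking the cyclotomic Frobenius through the base-change equivalence is superfluous: once you have reduced to spectra in the first step, only the spectrum-level (indeed unfiltered, since the statement concerns plain cyclotomic spectra rather than filtered objects) base-change formula is needed.
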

\begin{proof}
The forgetful functor from the $\infty$-category of cyclotomic spectra to the $\infty$-category of spectra is conservative and exact by \cite[Corollary II.1.7]{zbMATH07009201}.
Hence it suffices to show that the presheaf of spectra $\THH$ admits logarithmic Gysin sequences.
Argue as in the proof of Theorem \ref{HH.3} to conclude.
\end{proof}

\begin{cor}
\label{THH.6}
The presheaves of complexes $\TC^-$, $\TP$, and $\TC$ on $\lSch$ admit logarithmic Gysin sequences.
\end{cor}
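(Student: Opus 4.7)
The plan is to deduce the three statements formally from Theorem \ref{THH.5}, in direct analogy with how Corollary \ref{HH.5} was deduced from Theorem \ref{HH.3}. Let $Z\to X$ be a regular immersion of schemes and consider the diagram $\Gys(Z\to X)$ of Remark \ref{cdh.9}. By Theorem \ref{THH.5}, the two squares obtained by applying $\THH$ to $\Gys(Z\to X)$ are cartesian in the $\infty$-category of cyclotomic spectra, hence in particular in the $\infty$-category of genuine $S^1$-equivariant spectra obtained by forgetting the cyclotomic Frobenius.

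First I would handle $\TC^-$ and $\TP$. Since $\TC^-(Y):=\THH(Y)^{hS^1}$ and $\TP(Y):=\THH(Y)^{tS^1}$ for every fs log scheme $Y$, and since the functors $(-)^{hS^1}$ and $(-)^{tS^1}$ are limit-preserving endofunctors of $\Sp$, applying them to the cartesian squares for $\THH(\Gys(Z\to X))$ yields cartesian squares for $\TC^-(\Gys(Z\to X))$ and $\TP(\Gys(Z\to X))$. This is exactly the condition in Definition \ref{intro.3}, so $\TC^-$ and $\TP$ admit logarithmic Gysin sequences.

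For $\TC$, I would use the Nikolaus--Scholze description
\[
\TC(Y)\simeq \fib\bigl(\varphi-\mathrm{can}\colon \TC^-(Y)\to \TP(Y)\bigr),
\]
which is functorial in $Y\in \lSch$ (the cyclotomic Frobenius $\varphi$ and the canonical map $\mathrm{can}$ are natural transformations of presheaves of spectra on $\lSch$). Since the fiber of a natural transformation between two presheaves, each of which sends $\Gys(Z\to X)$ to a cartesian square, is again a presheaf sending $\Gys(Z\to X)$ to a cartesian square (fibers commute with finite limits), it follows that $\TC$ admits logarithmic Gysin sequences.

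There is no genuine obstacle here; the only point requiring mild care is the applicability of the Nikolaus--Scholze fiber formula, which holds because the cyclotomic spectrum $\THH(Y)$ is bounded below and, more importantly, because only the formal fact that $\TC$ is the fiber of a natural transformation $\TC^-\to \TP$ of presheaves of spectra on $\lSch$ is being used. Everything else is the abstract principle that exact functors and fibers preserve cartesian squares, applied to the input provided by Theorem \ref{THH.5}.
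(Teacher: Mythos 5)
Your proof is correct and follows essentially the same route as the paper, which simply applies $(-)^{hS^1}$, $(-)^{tS^1}$, and $\TC$ to the cartesian squares of $\THH(\Gys(Z\to X))$ supplied by Theorem \ref{THH.5}. One small wording fix: $(-)^{tS^1}$ is not limit-preserving, but it is exact, which is all that is needed to preserve cartesian squares in a stable $\infty$-category; likewise $\TC$ preserves them, either via your Nikolaus--Scholze fiber description or directly because it is a limit-preserving functor on cyclotomic spectra.
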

\begin{proof}
Let $Z\to X$ be a regular immersion of schemes.
Apply $(-)^{hS^1}$, $(-)^{tS^1}$, and $\TC$ to $\THH(\Gys(Z\to X))$ to conclude.
\end{proof}

\section{Prismatic cohomology}
\label{THH}

Let $\lQSyn$ denote the category of log quasi-syntomic log rings in \cite[Definition 4.5]{BLPO2},
and let $\lQRSPerfd$ denote the category of log quasi-regular semiperfectoid log rings in  \cite[Definition 4.7]{BLPO2}.
These are the log analogues of the corresponding categories due to Bhatt-Morrow-Scholze in \cite[Definitions 4.10, 4.20]{BMS19}.

A \emph{log quasi-syntomic formal log scheme} is a log formal scheme that is strict \'etale locally isomorphic to $\mathrm{Spf}(A,M)$ for some $(A,M)\in \lQSyn$.
We have the BMS filtrations on $\THH((A,M);\Z_p)$, $\TC^-((A,M);\Z_p)$, $\TP((A,M);\Z_p)$, and $\TC((A,M);\Z_p)$ defined in \cite[\S 7]{BLPO2} and \cite[\S 5]{BPO3}.
For a log quasi-syntomic formal log scheme $X$,
we have the BMS filtrations on $\THH(X;\Z_p)$, $\TC^-(X;\Z_p)$, $\TP(X;\Z_p)$, and $\TC(X;\Z_p)$ by strict \'etale descent.

For a commutative ring $A$ (resp.\ log scheme $X$),
let $A_p^\wedge$ (resp.\ $X_p^\wedge$) denote its $p$-completion.
Then $X_p^\wedge$ is a $p$-adic formal log scheme.
As noted in \cite[Remark 5.3]{BPO3},
we have a natural isomorphism
\[
\THH(X;\Z_p)\simeq
\THH(X_p^\wedge;\Z_p)
\]
of commutative algebra objects of the $\infty$-category of cyclotomic spectra $\CycSp$.
Hence for every log scheme $X$ such that $X_p^\wedge$ is a log quasi-syntomic formal log scheme,
we have the BMS filtration on $\THH(X;\Z_p)$.

The \emph{double speed Postnikov filtration} is $\Fil_{\geq n}:=\tau_{\geq 2n-1}$ for every integer $n$.

\begin{lem}
\label{THH.1}
For $(A,M)\in \lQRSPerfd$,
the BMS filtration on $\THH((A,M)\otimes (\Z[\N],\N);\Z_p)$ is the double speed Postnikov filtration.
\end{lem}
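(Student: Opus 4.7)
The plan is to exploit the defining property of the BMS filtration: on $\THH((-);\Z_p)$ for objects of $\lQRSPerfd$, it is by construction the double speed Postnikov filtration (\cite[\S 7]{BLPO2}, \cite[\S 5]{BPO3}). The strategy is to find a log qrsp cover of $(A,M) \otimes (\Z[\N],\N)$, verify the statement termwise on the \v{C}ech nerve, and then descend.

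The natural cover is obtained by adjoining compatible $p$-power roots of the free monoid generator, producing
\[
(A,M) \otimes (\Z[\N],\N)
\to
(A,M) \cotimes (\Z_p[\N[1/p^\infty]],\N[1/p^\infty]).
\]
I would show that the right-hand side is log qrsp: the new monoid $M \oplus \N[1/p^\infty]$ is uniquely $p$-divisible up to units, the underlying $p$-completed ring is semiperfect because both factors are, and the relevant cotangent complex condition follows because tensoring a log qrsp log ring with a log perfectoid log ring preserves log quasi-regular semiperfectoidness. Each term of the \v{C}ech nerve of this cover in the log quasi-syntomic topology is again log qrsp by the same argument, so on each term the BMS filtration equals the double speed Postnikov filtration, and in particular the underlying spectrum sits in even degrees.

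To conclude, I would descend: the BMS filtration on $\THH((A,M) \otimes (\Z[\N],\N);\Z_p)$ is the totalization of the BMS filtrations on the \v{C}ech nerve, and I need this to equal the double speed Postnikov filtration of the totalization. Both identifications reduce to showing that $\THH((A,M) \otimes (\Z[\N],\N);\Z_p)$ itself is concentrated in even degrees, which I would derive from a base-change identification
\[
\THH((A,M) \otimes (\Z[\N],\N);\Z_p)
\simeq
\THH((A,M);\Z_p) \otimes_{\THH(\Z;\Z_p)}
\THH((\Z[\N],\N);\Z_p),
\]
combined with an explicit computation of $\THH((\Z[\N],\N);\Z_p)$ showing evenness of the second factor. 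The main obstacle I expect is precisely this descent-plus-evenness check: totalizing the double speed Postnikov filtration does not automatically yield the double speed Postnikov filtration of the totalization without a uniform bound on the homotopy groups of the cosimplicial terms, and it is the log qrsp property together with the explicit structure of $(\Z[\N],\N)$ that must be used to supply this bound.
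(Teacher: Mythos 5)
There is a genuine gap at the final step. You reduce the descent problem to showing that $\THH((A,M)\otimes(\Z[\N],\N);\Z_p)$ is concentrated in even degrees, and you assert evenness of the factor $\THH((\Z[\N],\N);\Z_p)$; both claims are false. Note first that the paper defines the double speed Postnikov filtration here as $\Fil_{\geq n}:=\tau_{\geq 2n-1}$, not $\tau_{\geq 2n}$, precisely because the object in question has nontrivial odd homotopy: by the tensor decomposition of Lemma \ref{THH.8} one has $\THH((A,M)\otimes(\Z[\N],\N);\Z_p)\simeq\THH((A,M);\Z_p)\otimes\HH(\Z[\N],\N)$, and since $\L_{(\Z[\N],\N)/\Z}\simeq\Omega^1_{(\Z[\N],\N)/\Z}\simeq\Z[\N]\,d\log x$ is free of rank one, $\HH(\Z[\N],\N)\simeq\Z[\N]\oplus\Z[\N][1]$ has nonzero $\pi_1$ (likewise $\THH(\Z;\Z_p)$ itself has odd torsion, so the second factor in your base-change formula is far from even). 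In the descent picture this odd part appears as $H^1$ of the cosimplicial diagram of even terms, so evenness of each \v{C}ech term cannot propagate to the totalization, and an argument whose endgame is evenness of the totalization cannot close. What is actually needed is an identification of the descended filtration with $\tau_{\geq 2n-1}$, e.g.\ by showing each graded piece is concentrated in the two degrees $2n-1$, $2n$ with flat homotopy groups, or by matching the Day-convolution filtration on the tensor decomposition above.

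For comparison, the paper's proof is a one-line transport: since $\L_{\Z[\N]/\Z}\simeq\Z[\N]\simeq\L_{(\Z[\N],\N)/\Z}$ (by smoothness, respectively log smoothness, over $\Z$), the argument for the already-known non-log statement that the BMS filtration on $\THH((A,M)\otimes\Z[\N];\Z_p)$ is the double speed Postnikov filtration, namely \cite[Lemma 5.7]{BPO3}, carries over verbatim to $(\Z[\N],\N)$. Your cover by $p$-power roots of the monoid generator is in the same spirit as the proof of that cited lemma, so the skeleton of your approach is reasonable; it is the evenness endgame that must be replaced.
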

\begin{proof}
Since $\Z\to \Z[\N]$ is smooth and $\Z\to (\Z[\N],\N)$ is log smooth,
there are isomorphisms of complexes
\[
\L_{\Z[\N]/\Z}
\simeq
\Z[\N] 
\simeq
\L_{(\Z[\N],\N)/\Z}.
\]
Using this,
the argument in \cite[Lemma 5.7]{BPO3} can be translated into the setting of $(\Z[\N],\N)$.
\end{proof}

\begin{lem}
\label{THH.8}
For $(A,M)\in \lQRSPerfd$,
there are natural $S^1$-equivariant isomorphisms of filtered $\E_\infty$-rings
\begin{gather*}
\THH((A,M)\otimes \Z[\N];\Z_p)\simeq \THH((A,M);\Z_p)\otimes \HH(\Z[\N]),
\\
\THH((A,M)\otimes (\Z[\N],\N);\Z_p)
\simeq
\THH((A,M);\Z_p)\otimes \HH(\Z[\N],\N).
\end{gather*}
\end{lem}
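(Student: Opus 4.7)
The plan is to prove both isomorphisms in parallel; the arguments are essentially identical. I will construct the natural comparison map, show it is an underlying equivalence of $S^1$-equivariant $\E_\infty$-rings, and then match the filtrations. The comparison map is built from the two structure maps $(A,M)\to (A,M)\otimes (\Z[\N],\N)$ and $(\Z[\N],\N)\to (A,M)\otimes (\Z[\N],\N)$ together with the natural HKR-type map $\HH(\Z[\N],\N)\to \THH((\Z[\N],\N);\Z_p)$; multiplying inside $\THH((A,M)\otimes (\Z[\N],\N);\Z_p)$ produces the desired morphism of filtered $S^1$-equivariant $\E_\infty$-rings, and similarly for the $\Z[\N]$ version.

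To see it is an underlying equivalence, I apply a Künneth formula $\THH(X\otimes_\Z Y;\Z_p)\simeq \THH(X;\Z_p)\otimes_{\THH(\Z;\Z_p)}\THH(Y;\Z_p)$, which can be proved in the same fashion as Proposition \ref{HH.1} by left Kan extending from the polynomial case. Combined with the log HKR identification $\THH((\Z[\N],\N);\Z_p)\simeq \THH(\Z;\Z_p)\otimes_\Z \HH(\Z[\N],\N)$ for the log smooth algebra $(\Z[\N],\N)$ over $\Z$ (and its analogue for $\Z[\N]$), the Künneth formula collapses to $\THH((A,M);\Z_p)\otimes \HH(\Z[\N],\N)$, recovering the map from the previous step.

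For the filtrations, Lemma \ref{THH.1} (whose proof covers both $(\Z[\N],\N)$ and $\Z[\N]$, since in both cases $\L_{\bullet/\Z}\simeq \Z[\N]$) identifies the BMS filtration on the LHS with the double-speed Postnikov filtration $\Fil_{\geq n}=\tau_{\geq 2n-1}$. On the RHS, the BMS filtration on $\THH((A,M);\Z_p)$ is itself double-speed Postnikov (the log analogue of the quasiregular semiperfectoid case), and the HKR filtration on $\HH(\Z[\N],\N)$ coincides with the Postnikov filtration because $(\Z[\N],\N)/\Z$ is log smooth. Since $\THH((A,M);\Z_p)$ is concentrated in even degrees and $\HH(\Z[\N],\N)$ only in degrees $0$ and $1$ (from $\Omega^0=\Z[t]$ and $\Omega^1=\Z[t]\cdot d\log t$), a direct computation
\[
(F^\bullet\otimes G^\bullet)^n=\sum_{i+j=n}\tau_{\geq 2i-1}\THH((A,M);\Z_p)\otimes \tau_{\geq j}\HH(\Z[\N],\N)=\tau_{\geq 2n-1}
\]
shows that the tensor-product filtration on the RHS coincides with the double-speed Postnikov filtration on $\THH((A,M);\Z_p)\otimes \HH(\Z[\N],\N)$, and hence matches the LHS. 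The same computation applies with $\HH(\Z[\N])=\Z[t]\oplus \Z[t]\cdot dt[1]$.

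The main obstacle is the filtered Künneth formula together with the filtered compatibility of the log HKR isomorphism in the $p$-complete setting; both are expected to follow by left Kan extension from the polynomial case, as in Proposition \ref{HH.1}, where the BMS and HKR filtrations both reduce to (shifted) Postnikov filtrations and the Künneth isomorphism is transparent.
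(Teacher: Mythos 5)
Your argument is correct and is, in substance, the same one the paper delegates to its citation: the paper invokes \cite[(5.3)]{BPO3} for the first isomorphism and obtains the second ``similarly using Lemma \ref{THH.1}'', and your route --- Künneth/base change for $\THH$, the identification $\THH((\Z[\N],\N);\Z_p)\simeq \THH(\Z;\Z_p)\otimes_{\Sph}\Sph[B^{\mathrm{rep}}\N]$, and the matching of filtrations via Lemma \ref{THH.1} on the left and the double-speed Postnikov computation (using evenness of $\THH((A,M);\Z_p)$ for $(A,M)\in\lQRSPerfd$) on the right --- is exactly the content of that reference. The only small caveat is that your ``HKR-type map'' $\HH(\Z[\N],\N)\to\THH((\Z[\N],\N);\Z_p)$ is not a natural transformation in general (the canonical comparison goes the other way); it exists here only because of the monoid-algebra description above, which is in any case what your second paragraph actually uses to produce the equivalence.
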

\begin{proof}
The first isomorphism is in \cite[(5.3)]{BPO3}.
We can similarly obtain the second isomorphism using Lemma \ref{THH.1}.
\end{proof}

\begin{prop}
\label{THH.2}
Let $(R,P)\to (A,M),(B,N)$ be maps of log quasi-syntomic log rings.
Then the induced morphism of filtered $\E_\infty$-rings
\[
\THH((A,M)\otimes^\L_{(R,P)} (B,N);\Z_p)
\to
\THH((A,M);\Z_p)\otimes_{\THH((R,P);\Z_p)}\THH((B,N);\Z_p)
\]
is an isomorphism.
\end{prop}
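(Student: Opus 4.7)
The plan is to mirror the proof of Proposition \ref{HH.1} with the BMS filtration replacing the HKR filtration, log quasi-syntomic descent replacing left Kan extension, and the double-speed Postnikov filtration replacing the Postnikov filtration. Forgetting filtrations, the statement is the standard base-change identity for $p$-completed $\THH$ as a cyclotomic spectrum (which extends to log rings by Lemma \ref{THH.8} together with the unfiltered monoidal behavior of $\THH$); the content therefore lies in checking that the comparison respects the BMS filtrations.

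I would first handle the special case $(R,P) = (\Z,\{1\})$. By construction, the BMS filtration on $\THH((A,M);\Z_p)$ for $(A,M) \in \lQSyn$ is obtained via log quasi-syntomic descent from $\lQRSPerfd$. Picking log quasi-syntomic covers $(A,M) \to (\widetilde{A},\widetilde{M})$ and $(B,N) \to (\widetilde{B},\widetilde{N})$ with targets in $\lQRSPerfd$ and descending in both variables of the derived tensor product reduces the claim to the case $(A,M), (B,N) \in \lQRSPerfd$. In that case, the argument of Lemma \ref{THH.1} identifies the BMS filtration on each factor, and on the derived tensor product, with the double-speed Postnikov filtration; the proof then concludes exactly as in Proposition \ref{HH.1}, because the derived tensor product of two $\E_\infty$-rings with double-speed Postnikov filtrations is again double-speed Postnikov by a connectivity estimate.

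For general $(R,P)$, I would follow the second half of the proof of Proposition \ref{HH.1}: take a log quasi-syntomic cover $(R,P) \to (\widetilde{R},\widetilde{P})$ by a log quasi-regular semiperfectoid log ring and descend, so that the base $(R,P)$ may also be assumed to lie in $\lQRSPerfd$; one can then split off a factor of the form $(\Z[\N^I \oplus \N^J],\N^J)$ using the base-change identity of Lemma \ref{THH.8} and conclude from the $(R,P) = \Z$ case, just as the polynomial factor was split off in the proof of Proposition \ref{HH.1}. The main obstacle will be carrying out this simultaneous descent while tracking the BMS filtrations and the relative tensor products; in particular, one must verify that the relevant relative tensor products remain inside $\lQSyn$ so that the BMS filtration is defined on both sides of the comparison map, which is the log analogue of the analysis carried out in \cite[\S 4]{BMS19}.
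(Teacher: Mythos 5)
Your overall toolkit (quasi-syntomic descent to $\lQRSPerfd$, identification of BMS filtrations with double-speed Postnikov filtrations, splitting off polynomial factors via Lemma \ref{THH.8}) is the right one, but the order in which you apply the reductions creates a genuine gap. The paper first reduces, by left Kan extension in the variable $(B,N)$, to the case $(B,N)=(R,P)\otimes(\Z[\N^{i+j}],\N^j)$, and only then descends $(A,M)$ and $(R,P)$ to $\lQRSPerfd$; consequently the only tensor products it ever has to control are of the form (log quasi-regular semiperfectoid)$\,\otimes\,(\Z[\N],\N)$, which is exactly what Lemmas \ref{THH.1} and \ref{THH.8} handle, one variable at a time by induction. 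You instead start with the case $(R,P)=\Z$ and \emph{arbitrary} $(A,M),(B,N)$, descending both factors to $\lQRSPerfd$. This is where the argument breaks: the derived tensor product of two log quasi-regular semiperfectoid log rings over $\Z$ is in general not log quasi-regular semiperfectoid --- it need not even be discrete (e.g.\ $\F_p\otimes^\L_{\Z}\F_p$) --- so there is no reason for the BMS filtration on $\THH((A,M)\otimes^\L(B,N);\Z_p)$ to be the double-speed Postnikov filtration, and the ``argument of Lemma \ref{THH.1}'' does not apply to it, since that argument rests on $\L_{(\Z[\N],\N)/\Z}\simeq\Z[\N]$, i.e.\ on one factor being polynomial. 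Likewise, the assertion that a relative tensor product of double-speed Postnikov filtered rings over $\THH(\Z;\Z_p)$ is again double-speed Postnikov ``by a connectivity estimate'' is unjustified: the BMS filtration on the base $\THH(\Z;\Z_p)$ is not itself a Postnikov filtration, and even over the sphere such K\"unneth identifications require flatness or evenness of homotopy, not just connectivity. Finally, ``descending in both variables'' would itself need an argument that the bi-\v{C}ech nerve of the two covers computes the BMS filtration of the tensor product, which again presupposes control over tensor products of log quasi-regular semiperfectoid rings.

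The second half of your proposal --- descend $(R,P)$ to $\lQRSPerfd$ and split off a factor $(\Z[\N^I\oplus\N^J],\N^J)$ via Lemma \ref{THH.8} --- is essentially the paper's argument, but note that it already presupposes the left Kan extension reduction of $(B,N)$ to $\Poly_{(R,P)}$, and once that reduction is made the problematic $(R,P)=\Z$ special case is never needed: after the polynomial factor is split off, the claim follows from iterated applications of Lemma \ref{THH.8} to $(A,M)$ and $(R,P)$, inducting on the number of variables. Reorganizing your proof in that order --- left Kan extension first, then quasi-syntomic descent, then induction --- recovers the paper's proof and closes the gap.
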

\begin{proof}
By left Kan extension,
we reduce to the case when $(B,N)=(R,P)\otimes (\Z[\N^{i+j}],\N^j)$ for some integers $i,j\geq 0$.
We proceed by induction on $i$ and $j$.
The claim is clear if $i=j=0$.

If $i=1$ and $j=0$,
then by log quasi-syntomic descent and \cite[Theorem 4.22]{BLPO2},
we reduce to the case when $(A,M),(R,P)\in \lQRSPerfd$.
Apply Lemma \ref{THH.8} to $(A,M)$ and $(R,P)$ to show the desired result.

Assume $i>1$ and $j=0$.
Then we have the natural $S^1$-equivariant isomorphism of filtered $\E_\infty$-rings
\begin{align*}
& \THH((A[x_1,\ldots x_i],M);\Z_p)
\\
\simeq &
\THH((A[ x_1,\ldots,x_{i-1}],M);\Z_p)
\otimes_{\THH(R;\Z_p)}\THH(R[ x_i];\Z_p)
\\
\simeq &
\THH((A,M);\Z_p)\otimes_{\THH(R;\Z_p)}\THH(R[ x_1,\ldots,x_{i-1}];\Z_p)\otimes_{\THH(R;\Z_p)}\THH(R[ x_i];\Z_p)
\\
\simeq &
\THH((A,M);\Z_p)\otimes_{\THH(R;\Z_p)}\THH(R[x_1,\ldots,x_i];\Z_p),
\end{align*}
where the first and third isomorphisms are due to the case of $i=1$ and the second isomorphism is due to the induction hypothesis.

Argue as above for the induction on $j$.
\end{proof}

\begin{lem}
\label{THH.7}
Let $Z\to X$ be a regular immersion of noetherian schemes,
and let $E$ be the exceptional divisor on $\Bl_Z X$.
If $X_p^\wedge$ is quasi-syntomic,
then $(\Bl_Z X)_p^\wedge$ is quasi-syntomic, and $(\Bl_Z X,E)_p^\wedge$ is log quasi-syntomic.
\end{lem}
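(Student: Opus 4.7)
The claim is strict étale local on $\Bl_Z X$ and Zariski local on $X$, so using Example \ref{cdh.4} the plan is to reduce to the affine case $X = \Spec A$ with $A_p^\wedge$ quasi-syntomic and $Z = V(a_1, \ldots, a_n)$ cut out by a regular sequence. The intersections $U_I$ in Example \ref{cdh.4} are Zariski opens $\Spec B_i[v_{i,k}^{-1} : k \in I \setminus \{i\}]$ of the basic charts $U_i = \Spec B_i$, and they inherit essentially the same $\N$-structure (the extra chart generators $e_i - e_j$ of $P_I$ become invertible after localization). Since (log) quasi-syntomicity is preserved under Zariski localization, I would reduce further to a single chart $U_i = \Spec B_i$, where
\[
B_i = A[v_j : j \neq i] / (a_j - a_i v_j : j \neq i)
\]
carries the log structure $\N \to B_i$, $1 \mapsto a_i$ encoding $E|_{U_i}$.

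The heart of the argument is to show that $\L_{(B_i, \N)/A}$ has Tor-amplitude in $[-1, 0]$. First, I would verify that $(a_j - a_i v_j)_{j \neq i}$ is a regular sequence in $A[v_j : j \neq i]$: each element is linear in its $v_j$ with non-zero-divisor leading coefficient $-a_i$, and a direct induction using the regularity of $a_1, \ldots, a_n$ in $A$ shows that $a_i$ remains a non-zero-divisor in every successive quotient (the argument is already visible in the $n = 2$ toy case, where one expands $a_i f = g(a_{j} - a_i v_{j})$ coefficient-by-coefficient in $v_{j}$ and peels off factors of $a_i$). Consequently $A \to A[v_j : j \neq i] \to B_i$ factors as a smooth morphism followed by a complete intersection, making $\L_{B_i/A}$ perfect of Tor-amplitude in $[-1, 0]$. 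Feeding this into the transitivity triangle
\[
\L_{B_i/A} \to \L_{(B_i, \N)/A} \to \L_{(B_i, \N)/(B_i, \{0\})}
\]
and computing the rightmost term via the chart resolution $(B_i[\N], \N) \to (B_i, \N)$, $t \mapsto a_i$, as a two-term complex equivalent to $B_i \xrightarrow{a_i} B_i$ (which has Tor-amplitude in $[-1, 0]$ since $a_i$ is a non-zero-divisor), one concludes that $\L_{(B_i, \N)/A}$ itself has Tor-amplitude in $[-1, 0]$.

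The lemma then follows from transitivity for $\Z_p \to A \to (B_i, \N)$: in the triangle
\[
\L_{A/\Z_p} \otimes_A^{\L} B_i \to \L_{(B_i, \N)/\Z_p} \to \L_{(B_i, \N)/A},
\]
the first term has $p$-complete Tor-amplitude in $[-1, 0]$ because Tor-amplitude is preserved under base change along $A \to B_i$ and $A_p^\wedge$ is quasi-syntomic; the third term has Tor-amplitude in $[-1, 0]$ by the previous paragraph; hence so does the middle. The non-log statement for $(\Bl_Z X)_p^\wedge$ comes from the same argument with the trivial log structure, and bounded $p^\infty$-torsion is automatic from the noetherian hypothesis (which persists to $\Bl_Z X$). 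The main obstacle I anticipate is pinning down the log contribution $\L_{(B_i, \N)/(B_i, \{0\})}$ carefully, since the morphism $(B_i, \{0\}) \to (B_i, \N)$ is not strict; once this piece and the regular-sequence claim are in hand, the rest of the proof is a clean assembly of standard cotangent-complex triangles.
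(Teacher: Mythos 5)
Your argument is correct and reaches the same pivot point as the paper---after reducing to charts and to the transitivity triangle for $\Z_p\to A\to (B_i,P_i)$, everything comes down to showing that $\L_{(B_i,P_i)/A}$ has Tor-amplitude in $[-1,0]$, with bounded $p^\infty$-torsion supplied by noetherianity---but you handle that pivot by a genuinely different route. The paper invokes Construction \ref{HH.2}: the whole diagram $\Gys(Z\to X)$ is a derived pullback of $\Gys(\Spec(\Z)\to\A^n)$, Tor-amplitude of relative cotangent complexes is stable under derived base change, and in the universal case $X$, $\Bl_Z X$, and $(\Bl_Z X,E)$ are all (log) smooth over $\Z$, so the relative cotangent complexes are two-term complexes of finite projectives with no further work. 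You instead compute on each chart directly: the Koszul presentation $B_i=A[v_j]/(a_j-a_iv_j)$ exhibits $A\to B_i$ as smooth followed by a Koszul-regular quotient, and the divisorial log structure contributes $\L_{(B_i,\N)/B_i}\simeq\cofib(B_i\xrightarrow{a_i}B_i)$. Both approaches work; yours is self-contained and avoids the derived-base-change machinery, at the cost of two facts you only sketch. First, that the relations $a_j-a_iv_j$ generate the entire kernel and form a (Koszul-)regular sequence is precisely the standard presentation of the blow-up of a regular ideal (Stacks Project, Tag 0BIQ); it is safer to cite this than to extrapolate from the $n=2$ computation, since you need both regularity of the sequence \emph{and} $a_i$-torsion-freeness of the quotient. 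Second, the identification of the Gabber cotangent complex $\L_{(B_i,\N)/B_i}$ is cleanest by derived base change along $\Z[x]\to B_i$, $x\mapsto a_i$, from $\L_{(\Z[x],\N)/\Z[x]}\simeq\cofib(\Z[x]\xrightarrow{x}\Z[x])$, the latter computed by transitivity using log smoothness of $(\Z[x],\N)$ over $\Z$---at which point you are running the paper's base-change argument in the rank-one case anyway. Your reduction to the single charts $U_i$ rather than all intersections $U_I$ is legitimate, since the extra generators of $P_I$ map to units on $U_I$ and log quasi-syntomicity is strict étale (in particular Zariski) local.
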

\begin{proof}
Since the question is strict \'etale local on $X$,
we may assume that $X=\Spec(A)$ for some commutative ring $A$ and $Z=\Spec(A/(a_1,\ldots,a_n))$ for some regular sequence $a_1,\ldots,a_n$ in $A$ such that $A_p^\wedge$ is quasi-syntomic.
Consider the log ring $(B_I,P_I)$ in Example \ref{cdh.4} for every nonempty subset $I$ of $\{1,\ldots,n\}$.

Every $p$-complete noetherian commutative ring has bounded $p^\infty$-torsion,
so it suffices to show that $\L_{(B_I)_p^\wedge /\Z_p}$ and $\L_{((B_I)_p^\wedge,P_I)/\Z_p}$ have $p$-complete Tor amplitude in $[-1,0]$.
Since $A_p^\wedge$ is quasi-syntomic,
$\L_{A_p^\wedge/\Z_p}\otimes^\L_{A_p^\wedge} (B_I)_p^\wedge$ has $p$-complete Tor amplitude in $[-1,0]$ by \cite[Lemma 4.5(1)]{BMS19}.
Due to transitivity sequences for log cotangent complexes,
it suffices to show that $\L_{(B_I)_p^\wedge /A_p^\wedge}$ and $\L_{((B_I)_p^\wedge,P_I)/A_p^\wedge}$ have $p$-complete Tor amplitude in $[-1,0]$.
This question is closed under derived base change,
so we may assume that $Z\to X$ is the $0$-section $\Spec(\Z)\to \A^n$ by Construction \ref{HH.2}.
To finish the proof,
observe that $X$, $\Bl_Z X$, and $(\Bl_Z X,E)$ are log smooth over $\Spec(\Z)$ in this case.
\end{proof}

\begin{thm}
\label{THH.3}
The presheaf of filtered complexes $\THH(-;\Z_p)$ on the category of noetherian fs log schemes whose $p$-completions are log quasi-syntomic admits logarithmic Gysin sequences.
\end{thm}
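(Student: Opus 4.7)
The plan is to follow \emph{verbatim} the strategy of Theorem \ref{HH.3}, substituting Proposition \ref{THH.2} for Proposition \ref{HH.1} as the K\"unneth-type input and invoking Lemma \ref{THH.7} to guarantee that every log scheme appearing in the Gysin diagram $\Gys(Z\to X)$ of Remark \ref{cdh.9} still has a log quasi-syntomic $p$-completion, so that the BMS-filtered $\THH(-;\Z_p)$ is defined on it. Since the forgetful functor from filtered cyclotomic spectra to filtered spectra is conservative and exact, it suffices to treat $\THH(-;\Z_p)$ as a presheaf of filtered complexes. Moreover, because the BMS filtration is built by strict \'etale descent, the presheaf is a Zariski sheaf, so the cartesianness of the squares in \eqref{intro.1.1} may be checked Zariski-locally on $X$. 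I thereby reduce to $X=\Spec(A)$ with $A_p^\wedge$ quasi-syntomic and $Z=\Spec(A/(a_1,\ldots,a_n))$ for a regular sequence $a_1,\ldots,a_n$ in $A$.

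Next I would verify that each entry of $\Gys(Z\to X)$ lies in the admissible category. Lemma \ref{THH.7} immediately covers $(\Bl_Z X,E)$. For the deformation-space terms $(\Bl_{Z\times \square}(\rD_Z X),E^D)$ and $(\Bl_Z(\rN_Z X),E^N)$, I would apply the same lemma after replacing $X$ by the noetherian scheme $X\times\square$ (whose $p$-completion is log quasi-syntomic because $\square$ is log smooth over $\Spec(\Z)$ and the log quasi-syntomic property is preserved under log smooth base change via the transitivity sequences for log cotangent complexes) and $Z$ by the regular immersion $Z\times \{0\}$; the open subschemes $\rD_Z X$ and $\rN_Z X$ then inherit the log quasi-syntomic $p$-completion from the corresponding blow-ups.

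Construction \ref{HH.2} now identifies $\Gys(Z\to X)$ Zariski-locally with a derived pullback of the universal Gysin diagram $\Gys(\Spec(\Z)\to \A^n)$ along the classifying map $\Spec(A)\to \A^n$ for $(a_1,\ldots,a_n)$. Applying the filtered base change isomorphism of Proposition \ref{THH.2} entry-by-entry reduces the cartesianness of the squares in \eqref{intro.1.1} to the same assertion for the $0$-section $\Spec(\Z)\to \A_\Z^n$. This smooth case is handled by Theorem \ref{cdh.7}, provided the BMS-filtered $\THH(-;\Z_p)$ is a $(\P^\bullet,\P^{\bullet-1})$-invariant strict Nisnevich sheaf on the relevant category; this follows by arguing as in \cite[Proposition 8.3.11, Theorem 8.4.4]{BPO2} via $\square$-invariance, dividing Nisnevich descent, and \cite[Proposition 7.3.1]{BPO}.

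The main obstacle I anticipate is the compatibility of the derived-pullback step of Construction \ref{HH.2} with $p$-completion and with the BMS filtration: the animated log rings produced by the construction need not themselves be log quasi-syntomic, so Proposition \ref{THH.2} must first be extended by left Kan extension to a sufficiently large subcategory of animated log rings, and this extension must be verified to commute with $p$-completion while respecting the BMS filtration. Once this compatibility is in place, the reduction to the universal smooth case is formal and the remainder of the argument runs exactly as in the $\HH$-case of Theorem \ref{HH.3}.
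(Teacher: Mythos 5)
Your proposal is correct and follows essentially the same route as the paper: reduce Zariski-locally via Lemma \ref{THH.7}, use Construction \ref{HH.2} and the filtered base-change isomorphism of Proposition \ref{THH.2} to pass to the $0$-section $\Spec(\Z)\to\A^n$, and settle the smooth case by Theorem \ref{cdh.7}. The only cosmetic difference is that for the $(\P^\bullet,\P^{\bullet-1})$-invariant strict Nisnevich descent of the BMS-filtered $\THH(-;\Z_p)$ the paper simply cites \cite[Proposition 5.8]{BPO3} rather than re-running the argument of \cite{BPO2}.
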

\begin{proof}
Let $Z\to X$ be a regular immersion of noetherian schemes whose $p$-completions are quasi-syntomic.
By Lemma \ref{THH.7},
we have the diagram \eqref{intro.1.1} for $\THH(-;\Z_p)$ and $Z\to X$.
Argue as in the proof of Theorem \ref{HH.3},
and use Construction \ref{HH.2} and Proposition \ref{THH.2} to reduce to the case when $Z\to X$ is the $0$-section $\Spec(\Z)\to \A^n$.
This smooth case is done by Theorem \ref{cdh.7} and \cite[Proposition 5.8]{BPO3}.
\end{proof}

\begin{cor}
\label{THH.4}
The presheaves of complexes $R\Gamma_{\cPrism}(-)$ and $R\Gamma_{\syn}(-,\Z_p(d))$ for $d\in \Z$ on the category of fs log schemes whose $p$-completions are log quasi-syntomic admit logarithmic Gysin sequences.
\end{cor}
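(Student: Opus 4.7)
The plan is to deduce Corollary \ref{THH.4} from Theorem \ref{THH.3} by passing to graded pieces of the BMS filtrations, in direct analogy with how Corollary \ref{HH.4} was deduced from Theorem \ref{HH.3}. Let $Z\to X$ be a regular immersion of noetherian schemes whose $p$-completions are quasi-syntomic. By Lemma \ref{THH.7}, every fs log scheme appearing in the diagram $\Gys(Z\to X)$ of Remark \ref{cdh.9} has log quasi-syntomic $p$-completion, so the BMS filtrations on $\THH(-;\Z_p)$, $\TC^-(-;\Z_p)$, $\TP(-;\Z_p)$, and $\TC(-;\Z_p)$ are all defined on the entire diagram.

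By Theorem \ref{THH.3}, the squares in \eqref{intro.1.1} are cartesian for $\THH(-;\Z_p)$ viewed as a filtered complex. Applying the exact functors $(-)^{hS^1}$, $(-)^{tS^1}$, and $\TC$ to the filtered diagram $\THH(\Gys(Z\to X);\Z_p)$, exactly as in Corollary \ref{THH.6}, yields analogous cartesian filtered squares for $\TC^-(-;\Z_p)$, $\TP(-;\Z_p)$, and $\TC(-;\Z_p)$.

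By construction in \cite{BLPO2} and \cite{BPO3}, the graded pieces of these BMS filtrations recover, up to suitable shifts and twists, the Nygaard completed prismatic cohomology $R\Gamma_{\cPrism}(-)$ from $\TP(-;\Z_p)$ and the syntomic cohomology $R\Gamma_{\syn}(-,\Z_p(d))$ from $\TC(-;\Z_p)$. Since the $n$th graded-piece functor $\gr^n$ preserves all colimits, it sends cartesian squares of filtered complexes to cartesian squares of complexes, and the corollary follows. The main potential obstacle is verifying that the BMS filtration assembles into a presheaf on the category of fs log schemes $X$ with $X_p^\wedge$ log quasi-syntomic (rather than merely on log quasi-syntomic formal log schemes) and that it is compatible with the blow-ups appearing in $\Gys(Z\to X)$; however, both points are already built into its construction via the isomorphism $\THH(X;\Z_p)\simeq \THH(X_p^\wedge;\Z_p)$ and strict \'etale (resp.\ log quasi-syntomic) descent from the log quasi-regular semiperfectoid case established in the cited references, so nothing new needs to be verified.
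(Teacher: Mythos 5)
Your overall strategy (pass from the filtered cartesian squares of Theorem \ref{THH.3} to the statement by identifying graded pieces) is the right one, but the specific route you take has a genuine gap at the step where you claim that applying $(-)^{hS^1}$, $(-)^{tS^1}$, and $\TC$ to the BMS-filtered diagram $\THH(\Gys(Z\to X);\Z_p)$ ``yields analogous cartesian \emph{filtered} squares'' for $\TC^-$, $\TP$, and $\TC$ whose graded pieces are $R\Gamma_{\cPrism}\{n\}[2n]$ and $R\Gamma_{\syn}(-,\Z_p(n))[2n]$. The BMS filtration on $\TP(-;\Z_p)$ is \emph{not} obtained by applying $(-)^{tS^1}$ levelwise to the BMS filtration on $\THH(-;\Z_p)$: it is defined by descending the double-speed Postnikov filtration on $\TP$ of (log) quasi-regular semiperfectoid rings, and $\tau_{\geq 2n}(Y^{tS^1})$ is not $(\tau_{\geq 2n}Y)^{tS^1}$. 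What you actually obtain by applying $(-)^{tS^1}$ levelwise is a filtration with graded pieces $(\gr_{\BMS}^{n}\THH)^{tS^1}$, which are not Nygaard-completed prismatic cohomology; so the identification of graded pieces you invoke does not apply to the filtration for which you have cartesian squares. (Showing cartesianness for the genuine BMS filtration on $\TP$ is essentially equivalent to the statement for $R\Gamma_{\cPrism}$ itself, so this cannot be waved away.)

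The paper avoids $\TP$ and $\TC$ entirely: it uses the identification $\gr_{\BMS}^{j}\THH(-;\Z_p)\simeq \gr_{\rN}^{j}R\Gamma_{\cPrism}$ (from the proof of \cite[Proposition 7.4]{BLPO2}), so that the Nygaard \emph{graded pieces} of prismatic cohomology already occur as graded pieces of the filtered object handled in Theorem \ref{THH.3}. Exactness of $\gr^{j}$ then gives logarithmic Gysin sequences for each $\gr_{\rN}^{j}R\Gamma_{\cPrism}$, and completeness of the Nygaard filtration on $R\Gamma_{\cPrism}$ rebuilds the statement for $R\Gamma_{\cPrism}$ as a limit of finite extensions of graded pieces. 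Finally, $R\Gamma_{\syn}(-,\Z_p(d))$ is treated via the fiber sequence \cite[(5.1)]{BPO3} expressing it in terms of Nygaard-filtered prismatic cohomology and Frobenius, rather than as a graded piece of $\TC$. Your proposal can be repaired by substituting these two steps for your passage through $\TP$ and $\TC$; the rest of your setup (Lemma \ref{THH.7} guaranteeing that the whole diagram $\Gys(Z\to X)$ stays in the log quasi-syntomic world, and the reduction to Theorem \ref{THH.3}) matches the paper.
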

\begin{proof}
We have the Nygaard filtration $\Fil_\rN^{\geq \bullet} R\Gamma_{\cPrism}$ obtained from \cite[\S 7.4]{BLPO2} using strict \'etale descent.
For every integer $j$,
we have the natural isomorphism
\[
\gr_\BMS^j \THH(-;\Z_p)\simeq \gr_\rN^j R\Gamma_{\cPrism}
\]
as observed in the proof of \cite[Proposition 7.4]{BLPO2}.
Hence Theorem \ref{THH.3} implies that $\gr_\rN^j R\Gamma_{\cPrism}(-)$ admits logarithmic Gysin sequences.
It follows that $R\Gamma_{\cPrism}(-)$ admits logarithmic Gysin sequences too.
To show that $R\Gamma_\syn(-,\Z_p(d))$ admits logarithmic Gysin sequences,
use \cite[(5.1)]{BPO3}.
\end{proof}

\bibliography{bib}
\bibliographystyle{siam}

\end{document}